\numberwithin{figure}{section}
\numberwithin{equation}{section}
\newtheoremstyle{myplain}
{6pt}{6pt}%
{\itshape}
{}%
{\bfseries}
{.}%
{0.5em}%
{}%
\newtheoremstyle{mydefinition}
{6pt}{6pt}%
{\normalfont}
{}%
{\bfseries}
{.}%
{0.5em}%
{}%
\newtheoremstyle{myremark}
{6pt}{6pt}%
{\normalfont}
{}%
{\bfseries}
{.}%
{0.5em}%
{}%
\renewenvironment{proof}[1][Proof]{%
	\par\pushQED{\qed}\normalfont
	\topsep6pt\relax
	\trivlist
	\item[\hskip\labelsep\bfseries #1.]%
}{%
	\popQED\endtrivlist\@endpefalse
}
\theoremstyle{myplain}
\newtheorem{theorem}{Theorem}[section]
\theoremstyle{mydefinition}
\theoremstyle{myremark}
\theoremstyle{myplain}
\newtheorem{theo}[theorem]{Theorem}
\newtheorem{statm}[theorem]{Statement}
\newtheorem{cor}[theorem]{Corollary}
\theoremstyle{mydefinition}
\newtheorem{defi}[theorem]{Definition}
\theoremstyle{myremark}
\newtheorem{rmrk}[theorem]{Remark}
\newcommand{\df}{\operatorname{df}}       
\newcommand{\Ry}{\operatorname{Ry}}       
\newcommand{\capa}{\operatorname{cap}}    
\newcommand{\vcount}{\operatorname{vcount}}
\begin{document}
\title{Distribution of rooks on a chess-board representing a Latin square partitioned by a subsystem (Part 1.)\footnote{We intend to publish several consecutive papers on Latin and partial Latin squares and on bipartite graphs, so it is worth studying them in the order of publishing.}
}
\author{Béla Jónás}

\date{Revised July 2026 \hspace{3pt} Budapest, Hungary}

\maketitle

\begin{abstract}
A $d$-dimensional generalization of a Latin square of order $n$ is a chess-board of size
$n\times n\times\ldots\times n$ ($d$ times) carrying $n^{d-1}$ non-attacking rooks,
equivalently a high-dimensional permutation in the sense of Linial and Luria. For a
subsystem $T$ induced by a family $\langle E_1,\ldots ,E_d\rangle$ of subsets of
$\{1,2,\ldots ,n\}$, let $\df(T)=V(T)/n-c$, where $V(T)$ is the number of cells of $T$ and
$c$ the number of rooks in it.

Replacing $k$ of the sets $E_i$ by their complements yields a subsystem $T_k$, and the
$2^d$ subsystems so obtained partition the chess-board. We prove that
\[
\df(T_k)=(-1)^k\df(T_0),
\]
and give a multilinear proof showing that no assumption on the sets $E_i$ is needed and
that the result holds verbatim for $d$-fold stochastic matrices. Consequently the partition
has a single degree of freedom: the rook counts of all its $2^d$ members follow from their
volumes and the single number $\df(T_0)$. For $d=k=3$ this specializes to the identity of
Cruse relating a brick and its remote mate, a necessary condition for a partial Latin square
to be completable. We also prove that for $d=3$ the chess-board represents precisely one
main class.
\end{abstract}

\textbf{MSC-Class:} 05B15

\textbf{Keywords:} Latin square, high-dimensional permutation, main class, stochastic matrix, Hamming distance, discrepancy

\section*{Abbreviations}
\begin{tabular}{@{}ll}
LS&Latin square\\
LSC&Latin Super Cube\\
$d$-LSC&$d$-dimensional Latin Super Cube\\
RBC&remote brick couple
\end{tabular}

\section{Introduction}

The \emph{Hamming distance} between two \mbox{$d$-tuples} is the number of positions at which the corresponding coordinates differ. Formally if $a=(a_1,\ldots,a_d)$ and $b=(b_1,\ldots,b_d)$ then $d(a,b)=\left|\{i\colon a_i\neq b_i\}\right|$.

A \emph{Latin square of order $n$} is an $n\times n$ array consisting of $n$ different symbols such that each symbol appears exactly once in each row and column. The property that no row or column contains any symbol more than once is known as the \emph{Latin property}.

From now on, the set of symbols is always $\mathbb{Z}_n=\{1,2,\ldots,n\}$, so the Latin square can also be considered as a set of ordered triples of the form $(i,j,k)$, where $i$, $j$, $k\in\mathbb{Z}_n$, and each Hamming distance between two distinct triples is at least~2.

Two Latin squares are \emph{isotopic} if each can be obtained from the other by permuting the rows, columns, and symbols. This is an equivalence relation, with the equivalence classes are called \emph{isotopy classes}.

Each Latin square $Q$ has six \emph{conjugate} Latin squares obtained by uniformly permuting the coordinates in each of its triples. They are denoted by $Q(i,j,k)$, $Q(j,k,i)$, $Q(k,i,j)$, $Q(j,i,k)$, $Q(i,k,j)$, $Q(k,j,i)$, where $Q(i,j,k)=Q$.

Two Latin squares are called \emph{paratopic}, if one of them is isotopic to a conjugate of the other. This is an equivalence relation, where the equivalence classes are called \emph{main classes}.

\section{d-LSC}
A $3$-dimensional chess-board of size $n\times n\times n$ is denoted by $H_n^3$ or simply $H^3$.
Each cell is identified by a triple $(i,j,k)$, where $i$, $j$, $k\in\mathbb{Z}_n$ and the distance between two cells is the Hamming distance.
The chess-board $H_n^3$ can be regarded as a Rubik's cube with $n^3$ cells (cubelets) and can be placed in a coordinate system with one vertex at the origin, as depicted in  Figure~\ref{fig1_1}.
\begin{figure}[htb]
\centering\includegraphics[scale=0.3] {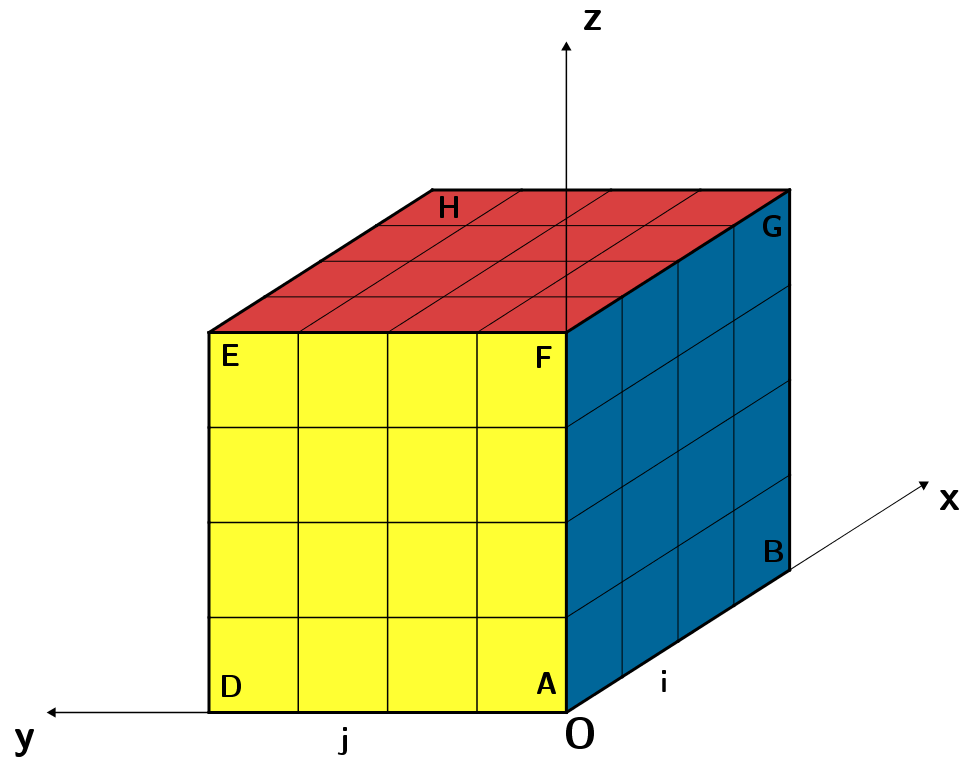}
\caption{}\label{fig1_1}
\end{figure}

There is an immediate generalization of this structure to dimension $d$, where $d > 3$. The chess-board $H_n^d$ contains $n^d$ cells and each cell is identified by a \mbox{$d$-tuple} $(e_1,e_2,\ldots,e_d)$ where $e_i\in\mathbb{Z}_n$ for each $i \in\mathbb{Z}_d$. In case of $d=2$ and $d=3$ we generally denote the coordinate axes $x$, $y$ and $x$, $y$, $z$, respectively. 
An illustration of the latter case is shown in Figure ~\ref{fig1_1}. 

If $d>3$, then the axes of the chess-board $H_n^d$ are denoted by $t_1$, $t_2$, \ldots, $t_d$. 
If a cell is identified by a \mbox{$d$-tuple} $(e_1,e_2,\ldots,e_d)$, then we consider $e_i$ as a coordinate of the cell on the axis $t_i$ for each $i\in \mathbb{Z}_d$.

\begin{defi}
Let $H^d_n$ be a $d$-dimensional chess-board and $j$ be a fixed coordinate on the axis $t_i$, where $j\in \mathbb{Z}_n$ and $i\in \mathbb{Z}_d$. The set of cells of the chess-board whose coordinate is $j$ on the axis $t_i$, is called the \emph{$(d-1)$-dimensional subspace} of $H^d_n$ and denoted by $H_i(j)$. 
\end{defi}

\begin{cor}
For any axis $t_i$, the subspaces $H_i(j)$ form a partition of $H^d_n$ if $j$ goes from $1$ to $n$.
\end{cor}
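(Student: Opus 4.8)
The plan is to exhibit the collection $\{H_i(j) : j\in\mathbb{Z}_n\}$ as the family of fibres of the coordinate-projection map onto the axis $t_i$, and then to invoke the elementary fact that the nonempty fibres of a function partition its domain. First I would fix the axis $t_i$ and introduce $\pi_i\colon H^d_n\to\mathbb{Z}_n$ defined by $\pi_i(e_1,e_2,\ldots,e_d)=e_i$. This map is well defined because, by the conventions fixed just before the definition, every cell of $H^d_n$ is identified by a unique $d$-tuple whose $i$-th entry $e_i$ lies in $\mathbb{Z}_n$. Unravelling the definition of $H_i(j)$, a cell $c$ lies in $H_i(j)$ exactly when $\pi_i(c)=j$; hence $H_i(j)=\pi_i^{-1}(j)$ for every $j\in\mathbb{Z}_n$.

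Next I would check the three defining conditions of a partition. Covering: an arbitrary cell $c=(e_1,\ldots,e_d)$ satisfies $c\in H_i(e_i)=H_i(\pi_i(c))$, so $\bigcup_{j=1}^{n}H_i(j)=H^d_n$. Pairwise disjointness: if $c\in H_i(j)\cap H_i(j')$ then $j=\pi_i(c)=j'$, so $H_i(j)\cap H_i(j')=\varnothing$ whenever $j\neq j'$. Non-emptiness of each block: for a given $j\in\mathbb{Z}_n$, the cell all of whose coordinates equal $1$ except the $i$-th, which equals $j$, has $\pi_i$-value $j$ and therefore belongs to $H_i(j)$; this also confirms that the range of $\pi_i$ is exactly $\mathbb{Z}_n$, so the index set $\{1,\ldots,n\}$ is neither too small nor too large.

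I do not expect any real obstacle: the statement is essentially a reformulation of the observation that, for fixed $i$, "having $i$-th coordinate equal to $j$" is a function of the cell taking precisely the values $1,2,\ldots,n$. The only point that merits a sentence of care is matching the index range with the set of attained values, which is handled by the non-emptiness check above, so the argument above can be written out in full in just a few lines.
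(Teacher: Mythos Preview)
Your proposal is correct; the paper itself provides no proof of this corollary, treating it as an immediate consequence of the preceding definition. Your argument via the fibres of the coordinate projection $\pi_i$ is precisely the natural way to make that immediacy explicit, so there is nothing to add or change.
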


\begin{defi}
A subspace of the chess-board of dimension 1 is called a \emph{file}, of dimension 2 is called a \emph{layer}.
\end{defi}

\begin{defi}
If the chess-board $H_n^d$ contains exactly $n^{d-1}$ non-attacking rooks, then the structure is called a \emph{Latin Super Cube of dimension d} or a \emph{$d$-LSC} for short.
\end{defi}
\begin{rmrk}\label{rmrk:hdperm}
The $d$-LSC is not a new object. Linial and Luria~\cite{[3]} call it a
\emph{high-dimensional permutation}: for them a \emph{$k$-dimensional permutation of
order $n$} is an $[n]^{k+1}$ array of zeros and ones in which every line contains a
unique~$1$, a line being the set of entries obtained by letting one coordinate run
over $\mathbb{Z}_n$ and fixing all the others. Their $1$-dimensional permutation is a
permutation matrix and their $2$-dimensional permutation is an order-$n$ Latin square.
Mind the shift of the index: a $d$-LSC is a $(d-1)$-dimensional permutation in their
sense, so our $2$-LSC and $3$-LSC are their $1$- and $2$-dimensional permutations.
\end{rmrk}

A brief description of the concept can be found in~\cite{[1]}. Instead of the term “Latin property” we often say that the rooks are non-attacking, or they \emph{do not see each other}. Each subspace of the chess-board of dimension $k$, where $k\in \mathbb{Z}_d$, is itself considered as a $k$-LSC with $n^k$ cells and $n^{k-1}$ non-attacking rooks. Consequently, a file contains 1 rook, a layer contains $n$ non-attacking rooks.

\section{One Cube Represents One Main Class}
The first element of a double identifying the cell $(i,j)$ of a Latin square is the row coordinate on the $x$-axis, the second is the column coordinate on the $y$-axis and the plane containing the both axes is denoted by $[xy]$. The cube, in which rooks are placed according to symbols, lies on the plane and one vertex, denoted by $A$, is at the origin of $[xy]$ and the two neighboring vertices, denoted by $B$ and $D$ are on the $x$-axis and $y$-axis, respectively. 
The third neighboring vertex of $A$, denoted by $F$, is on the $z$-axis.
\begin{defi}
Building a $3$-LSC from a Latin square by placing a rook into the cublet $(i,j,k)$ when the cell $(i,j)$ of the Latin square contains a symbol $k$ is called \emph{composition}. 
\end{defi}
This implies that a 3-LSC derived by composition from a Latin square is always created in a right-handed coordinate system and the symbols no longer play a special role in the LSC (a symbol value is just one of the three coordinates).
\begin{defi}
Place a 3-LSC in a right-handed coordinate system with one vertex at the origin. Creating a Latin square by placing a symbol $k$ into the cell $(i,j)$ of the square when the cublet $(i,j,k)$ contains a rook is called \emph{projection}.
\end{defi}

Since the rooks are non-attacking, the result of the projection of a \mbox{3-LSC} to any face of the cube is clearly a Latin square. 

\begin{defi}
Two 3-LSCs are \emph{isotopic} if each can be obtained from the other by permuting the rows layers, columns layers and symbols layers.
\end{defi}

\begin{rmrk}\label{rmrk240}
Let $L$ be a $3$-LSC derived from a Latin square $Q$ by composition. There is a one-to-one correspondence between the permutations of the rows, columns and symbols of $Q$ and the permutations of the row, column and symbol layers of $L$. Therefore, there is a one-to-one correspondence between the isotopes of $Q$ and isotopes of $L$, that is if $\mathcal{P}$ is a series of permutations that transforms $Q$ to $Q^*$ then applying $\mathcal{P}$ on $L$, the resulting LSC $L^*$ can be derived from $Q^*$ by composition.
\end{rmrk}

\begin{defi}
Since the permutations of the layers and the rotations of the cube leave the property unchanged that the rooks are non-attacking, they are called \emph{Latin transformations}.
\end{defi}

If we consider a $3$-LSC $L$ derived from a Latin square $Q$ by composition the $L$ does not ``remember'' which face of the cube was on the plane when the composition was applied, so applying Latin transformations, each face of the cube can be the face to apply a projection. To apply a projection to a specific face of the cube, without losing generality, we always rotate the cube so that the given face is on the plane [xy] and a pre-selected vertex is at the origin.

\begin{defi}
A face of the cube can be identified by 3 adjacent vertices, so the face that contains vertices $A$, $B$ and $D$ is denoted by $[BAD]$ where the middle vertex $A$ is at the origin, the first vertex $B$ is on the $x$-axis and the third vertex $D$ is on the $y$-axis, as shown in the Figure~\ref{fig1_1}. The face $[BAD]$ is called \emph{bottom face}, and the parallel face is called \emph{cover face} and can be denoted $[EFG]$ or $[GHE]$.
\end{defi}

\begin{rmrk}\label{rmrk270}
Note that in a right-handed coordinate system with the face $[EFG]$ on the plane [xy], the edge $EF$ is on the $x$-axis and the edge $FG$ is on the $y$-axis and similarly with the face $[GHE]$, the edge $GH$ is on the $x$-axis and the edge $HE$ is on the $y$-axis, since the cube is located under the given faces, so the coordinates on the z-axis increase downwards.
\end{rmrk}

\begin{defi}
Let $Q[BAD]$ denote the Latin square derived from the $3$-LSC $L$ by projection to the bottom face $[BAD]$.
\end{defi}

\begin{theo} 
If $L$ is a 3-LSC with $n^2$ non-attacking rooks and $Q$ is the Latin square derived from $L$ by projection to an arbitrary face of the cube then the LSCs obtained from $L$ by Latin transformations are all and only those LSCs, that can be derived from a paratope of $Q$ by composition.  
\end{theo}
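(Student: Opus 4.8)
The plan is to transport the action of the Latin transformations on $L$ to an action on Latin squares via the composition/projection correspondence, and then to identify the resulting orbit with the main class of $Q$. First I would record the dictionary already implicit in the text: composition and projection to the bottom face $[BAD]$ are mutually inverse bijections between the Latin squares of order $n$ and the $3$-LSCs placed in the standard right-handed system, so an LSC ``derivable from a Latin square $R$ by composition'' is precisely the unique LSC whose bottom-face projection equals $R$; in particular the composition of $L[BAD]$ returns $L$. Next I would observe that reading a Latin square off any face of $L$ yields a paratope of $L[BAD]$ (this is the face-rotation analysis below), so the main class of $Q$ does not depend on the chosen face and we may as well assume $Q=L[BAD]$. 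Writing $G$ for the group generated by the layer permutations and the cube rotations, the theorem then becomes the assertion that the $G$-orbit of $L$ coincides with the set of LSCs whose bottom-face projection lies in the main class of $Q$.

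For the ``only such LSCs'' inclusion I would check that each generator of $G$ keeps the bottom-face projection inside the main class. Layer permutations along the three axes correspond exactly to isotopies of the projected square --- this is Remark~\ref{rmrk240} read in the projection direction --- hence they preserve the isotopy class of the projection. For a cube rotation $\rho$, the bottom face of $\rho L$ is, up to the induced placement, one of the six faces of $L$; reading a Latin square off a face amounts to choosing which of the three coordinates becomes the symbol coordinate and possibly reversing some axes (the downward $z$-orientation of Remark~\ref{rmrk270} enters here), so $(\rho L)[BAD]$ is a conjugate of $L[BAD]$ composed with an isotopy, i.e.\ a paratope of $L[BAD]$. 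Since paratopy is an equivalence relation, any word in these generators applied to $L$ has bottom-face projection in the main class of $Q$, which is one inclusion.

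For the reverse inclusion I must realise every paratope of $Q$ as the bottom-face projection of some LSC in the orbit of $L$. A paratope is an isotope $\tau(C)$ of some conjugate $C$ of $Q$. Because the rotation group of the cube surjects onto the group of permutations of its three coordinate axes, some rotation $\rho$ produces an LSC $\rho L$ whose bottom-face projection is an isotope $\sigma(C)$ of $C$; applying the layer permutation realising the isotopy $\tau\circ\sigma^{-1}$ then transforms $\sigma(C)$ into $\tau(C)=R$, so the composition of $R$ is obtained from $L$ by a rotation followed by a layer permutation and thus lies in the orbit of $L$. Combined with the first inclusion and the composition/projection bijection, this yields the stated equality.

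The step I expect to be the main obstacle is the cube-rotation bookkeeping: pinning down, with a fixed coordinate convention, exactly which conjugate-plus-isotopy of $L[BAD]$ is read off each of the six faces in each of its four in-plane placements. The delicate point is that only proper rotations are permitted, not reflections, so one must verify that the transpose-type conjugates are still reached; for instance a $180^{\circ}$ rotation about a face-diagonal axis interchanges two coordinate axes while reversing the third, producing the transpose conjugate up to a symbol reversal, and the leftover reversal is a layer permutation. Once this explicit correspondence between the $24$ rotations and (conjugate, isotopy-class) pairs is written out, both inclusions follow immediately from Remark~\ref{rmrk240}.
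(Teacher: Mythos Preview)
Your proposal is correct and follows the same overall architecture as the paper: establish the composition/projection bijection, handle layer permutations via Remark~\ref{rmrk240}, and reduce the rotation analysis to checking that each generator sends the bottom-face projection to a paratope.

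The one genuine methodological difference is how the transpose-type (``secondary'') conjugates are reached. You argue abstractly: the rotation group of the cube surjects onto the symmetric group on the three axes, so for any conjugate $C$ some rotation realises $C$ up to axis reversals, and the residual reversals are absorbed as layer permutations. The paper instead exhibits a specific Latin transformation: it reverses the layer order on all three axes simultaneously (central inversion), observes that the resulting LSC, read from the antipodal vertex $H$, has its $x$- and $y$-axes swapped, and computes directly that the three faces through $H$ project to $Q(j,i,k)$, $Q(i,k,j)$, $Q(k,j,i)$. For the ``only'' inclusion the paper likewise commits to a single concrete generator, the $-90^\circ$ face rotation, and computes that it yields $Q(j,n{+}1{-}i,k)$, an explicit paratope. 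Your group-theoretic phrasing is cleaner and avoids the coordinate bookkeeping you flag as the main obstacle; the paper's explicit computations trade that elegance for a self-contained verification that does not invoke the structure of the rotation group.
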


\begin{proof}
Let $L$ be a 3-LSC in a right-handed coordinate system with vertex $A$ at the origin, $B$ on the $x$-axis, $D$ on the $y$-axis and $F$ on the $z$-axis as indicated in  Figure~\ref{fig1_1}. Let $Q$ be the Latin square on the face $BAD$ derived from $L$ by projection, hence $Q[BAD]=Q=Q(i,j,k)$, according to our notations. 

Derive $Q[DAF]$ from $L$ by projection onto the face $[DAF]$ in the right-handed coordinate system $(y,z,x)$, the rook in the cell, originally identified by 3-tuple, $(i,j,k)$ results the symbol $i$ in the cell $(j,k)$, so $Q[DAF] = Q(j,k,i)$. 
In the same way, in the right-handed coordinate system $(z,x,y)$, the rook in the cell $(i,j,k)$ projected onto the face $[FAB]$ results the symbol $j$ in the cell $(k,i)$, thus $Q[FAB] = Q(k,i,j)$. So, if we take the Latin squares derived from $L$ by projection onto the 3 faces that coincide at the origin of the cube, we get 3 conjugate Latin squares $Q=Q(i,j,k)$, $Q(j,k,i)$ and $Q(k,i,j)$.
\begin{defi}
The Latin squares $Q(i,j,k)$, $Q(j,k,i)$ and $Q(k,i,j)$, where  $Q(i,j,k) = Q$ are called \emph{primary conjugates of $Q$}.
\end{defi}

Permute the layers on each axis of $L$ in reverse order. The resulting LSC and $L$ are isotopic and the cell $(i,j,k)$ is moved to cell $(n+1-i,n+1-j,n+1-k)$. Place a right-handed coordinate system with the origin at vertex $H$ such that the $z$-axis is aligned with the edge $HC$ of the cube. Then the edge $HG$ is on the $x$-axis and the edge $HE$ is on the $y$-axis. So, the new $z$-axis is parallel to the old one, however the new $x$-axis is parallel to the old $y$-axis and the new $y$-axis is parallel to the old $x$-axis, according to Remark~\ref{rmrk270}. Consequently, the new coordinates of the cell $(n+1-i,n+1-j,n+1-k)$ are $(j,i,k)$, so, the projection to the face $GHE$ gives the Latin square $Q(j,i,k)$, the projection to the face $EHC$ gives the Latin square $Q(i,k,j)$, projection to the face $CHG$ gives the Latin square $Q(k,j,i)$.

\begin{defi}
The Latin squares $Q(j,i,k)$, $Q(i,k,j)$ and $Q(k,j,i)$ are called \emph{secondary conjugates of $Q$}.
\end{defi}

The primary conjugates of $Q$ can be derived from $L$ by projection, the secondary conjugates of $Q$ can be derived from a specific isotope of $L$ by projection. Consequently, based on Remark~\ref{rmrk240} each element of the main class of $Q$ can be derived from the proper isotope of $L$.

Now we prove that a Latin square $Q^*$ derived from an LSC $L^*$ produced from $L$ by Latin transformations is an element of the main class of $Q$.

All 6 faces of the cube can be moved to the bottom face and all four vertices of
the bottom face can be rotated to the origin without changing the bottom face, so
the cube has 24 positions, in accordance with the fact that the rotation group of
a physical cube has 24 elements.

\begin{defi}\label{def:facerot}
A rotation of the cube by $-90$ degrees about the axis that passes through the
centre of the cube and is parallel to one of the coordinate axes $x$, $y$, $z$ is
called a \emph{face rotation}. These three rotations are denoted by $f_x$, $f_y$
and $f_z$, respectively.
\end{defi}

The rotation $f_z$ is the one illustrated in Figure~\ref{fig1_1}: it moves the
vertex $B$ to the origin and the face $ABCD$ remains the bottom face, whereas
$f_x$ and $f_y$ bring a new face into the bottom position. Since $f_x$, $f_y$ and
$f_z$ are rotations about two distinct axes, they generate the whole rotation
group of the cube; in fact any two of them already do. Hence from each position of
the cube all 24 positions can be achieved with a sequence of face rotations.
An equivalence class is transitive, so it is enough to prove that a Latin square
$Q^*$ derived from an LSC $L^*$ produced from $L$ by a \emph{single} face rotation
is an element of the main class of $Q$.

Let $f_a$ be a face rotation, where $a\in\{x,y,z\}$, let $L^*=f_a(L)$ and let $Q^*$
be the Latin square derived from $L^*$ by projection to the bottom face. The three
axes play symmetric roles, since a rook is nothing but a triple $(i,j,k)$ and the
Latin property (any two distinct triples are at Hamming distance at least~$2$) is
symmetric in the three coordinates. Therefore it suffices to describe the effect of
$f_a$ on the two axes it moves, the third coordinate being left unchanged.

Consider first $f_z$. It moves the row layer $i$ of $L$ into the column layer
$(n+1-i)$, and the column layer $j$ into the row layer $j$; the coordinates of the
symbol layers remain unchanged. The order of the row layers does not change, but the
layers are moved to the other side of the origin, so their coordinates change from
$1,2,\ldots ,n$ to $n,n-1,\ldots ,1$, therefore the new column coordinate is
$(n+1-i)$. Consequently
\[
f_z\colon (i,j,k)\mapsto (j,\;n+1-i,\;k),
\]
and by the symmetry of the three axes,
\[
f_x\colon (i,j,k)\mapsto (i,\;k,\;n+1-j),\qquad
f_y\colon (i,j,k)\mapsto (n+1-k,\;j,\;i).
\]

Now permute, in reverse order, the layers perpendicular to the axis on which the
term $(n+1-\cdot)$ appears: the column layers for $f_z$, the symbol layers for
$f_x$ and the row layers for $f_y$. By Remark~\ref{rmrk240} such a permutation of
layers is an isotopy, and it turns the three maps above into
\[
(i,j,k)\mapsto (j,i,k),\qquad
(i,j,k)\mapsto (i,k,j),\qquad
(i,j,k)\mapsto (k,j,i),
\]
respectively. Hence $Q^*$ is isotopic to a conjugate of $Q$ in each of the three
cases, that is, $Q^*$ is a paratope of $Q$.
\end{proof}

\begin{rmrk}
Permuting the layers of $H^3$ perpendicular to a given axis in reverse order corresponds to a plane reflection in the Euclidean sense. Permuting the layers on each axis of $H^3$ in reverse order corresponds to central symmetry. 
\end{rmrk}

\begin{rmrk}
The three maps obtained at the end of the proof are exactly the three transpositions
of the coordinates $(i,j,k)$. Since the transpositions generate the symmetric group
$S_3$, the face rotations, combined with permutations of layers, already produce all
six conjugates of $Q$. In particular, the secondary conjugates arise from the face
rotations $f_x$, $f_y$, $f_z$ themselves, which gives a second derivation of them,
independent of the central symmetry argument used above.
\end{rmrk}

\begin{rmrk}
Secondary conjugates do not give us additional information, so from now on we only deal with primary conjugates.
\end{rmrk}

\goodbreak
\section{Hamming Bricks}

Let $X$ a subsystem induced by a family of sets $<E_1,E_2,\ldots ,E_d>$ over $\mathbb{Z}_n$. Then $X$ contains all cells identified by the \mbox{$d$-tuples} $(e_1,e_2,\ldots ,e_d)$ for which $e_i \in  E_i$ for each $i\in \mathbb{Z}_d$.

\begin{defi}
The subsystem $X$ is visible if $max\{E_i\} - min\{E_i\} = |E_i|-1$
for each $i\in \mathbb{Z}_d$. If $X$ is visible, it is called a \emph{Hamming brick} or simply a \emph{brick}.
\end{defi}

For each set $E_i$, there exists a permutation $p_i$ which places the different elements of the set $E_i$ as coordinates on the axis $t_i$ into the set of coordinates $1, 2,\ldots ,|E_i|$, leaving the coordinates on the other axes unchanged. This permutation only changes the order of the $(d-1)$-dimensional subspaces $H_i(j)$, where $j\in \mathbb{Z}_n$.
After executing all the permutations $p_1,p_2,\ldots ,p_d$, the subsystem $X$ is a brick permuted to the origin. We generally examine the subsystems in this form.

\begin{rmrk}
A brick permuted to the origin of the chess-board is a solid rectangular cuboid of size \mbox{$|E_1|\times |E_2|\times \ldots\times |E_d|$} in the Euclidean geometry.
\end{rmrk}

\begin{defi}
Let $X$ be a set of cells and $c$ be a cell of $H_n^d$. We define the Hamming distance between $X$ and $c$ as follows:
\[
d(X,c)=\min\{d(x,c)\mid x\in X\},
\]
where $d(x,c)$ is the Hamming distance between the cells $x$ and $c$.
\end{defi}

Clearly $d(X,c)>0$ if and only if $c\notin X$.

\begin{defi}
Let $T\subseteq H_n^d$ be a brick. The Hamming sphere of radius $r$, center $T$ is a set of cells for which
\[
S_r(T)=\{c\in H_n^d\mid d(T,c)=r\}.
\]
\end{defi}

Obviously $S_0(T)=T$.

So, we generalized the cell-centered Hamming sphere to a brick-centered Hamming sphere. The Figure ~\ref{fig2_1} shows a Hamming sphere around the cubelet K with $r=1$, a Hamming sphere around the brick T with $r=1$. 

\begin{figure}[htb]
\centering
\begin{tabular}{c}
\hbox to 0.9\textwidth {\includegraphics[scale=.45]{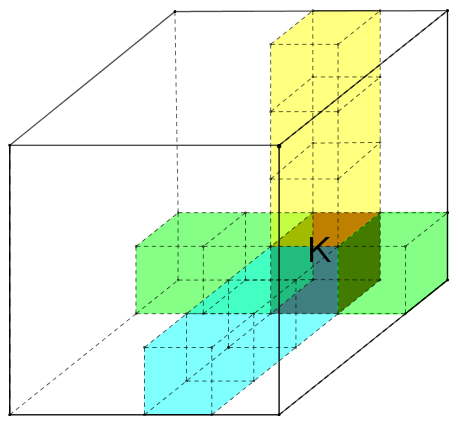}\hfill\includegraphics[scale=.45]{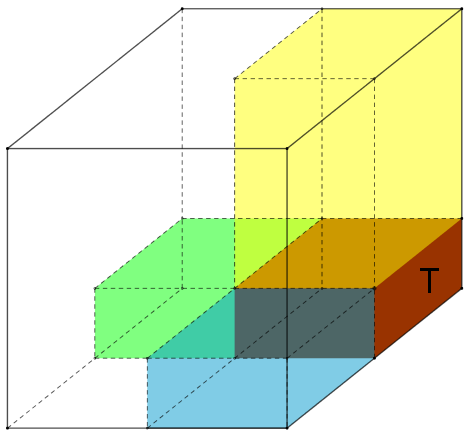}}
\end{tabular}
\caption{}\label{fig2_1}
\end{figure}

The Figure ~\ref{fig2_2} shows a Hamming sphere around the brick T with $r=2$ and a Hamming sphere around the brick T with $r=3$.

\begin{figure}[htb]
\centering
\begin{tabular}{c}
\hbox to 0.9\textwidth {\includegraphics[scale=.45]{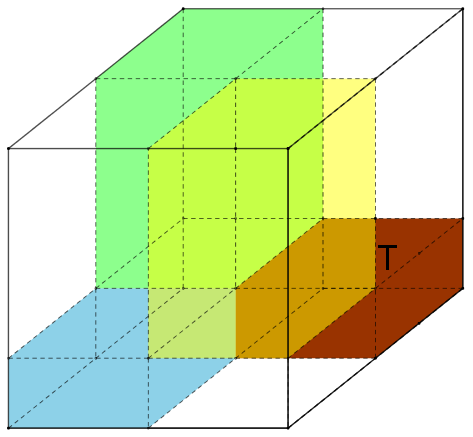}\hfill\includegraphics[scale=.45]{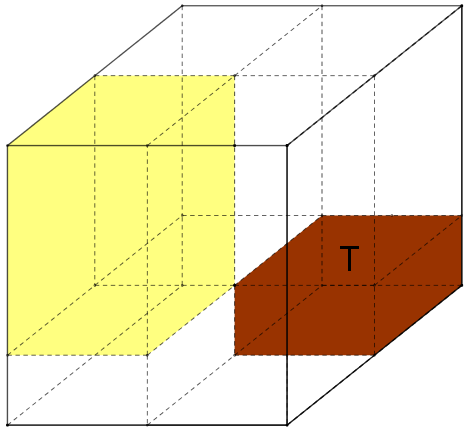}}
\end{tabular}
\caption{}\label{fig2_2}
\end{figure}

\begin{defi}
Let $T$, $U\subseteq H_n^d$ be two bricks. The Hamming distance between these two bricks~is:
\[
d(T,U)=\min\{d(x,y)\mid x\in T, y\in U\} 
\]
\end{defi}

It is evident that $d(T,U)=0$ if and only if $T\cap U\neq\emptyset$.

For $d=3$, $\dbinom{3}{0}$ brick has distance $0$ from brick $T$ ($T$ itself), $\dbinom{3}{1}$ bricks have distance 1 from $T$ and $\dbinom{3}{2}$ bricks have distance 2 from $T$ and $\dbinom{3}{3}$ brick has distance 3 from $T$.

\begin{defi}
A brick of size $e_1\times e_2\times \ldots\times e_d$ is a \emph{real brick}, if $1\leq e_1,e_2,\ldots,e_d<n$.
\end{defi}

Let $T_0$ be a brick of size $e_1\times e_2\times \ldots\times e_d$, where $1\leq e_1,e_2,\ldots,e_k<n$, $e_{k+1}=\ldots=e_d=n$. In this case $k$ is the maximum distance between the brick $T_0$ and a cell, so the maximum distance between $T_0$ and another brick.

Let $I$ be a set of indices, where $I\subseteq\{1,2,\ldots,k\}$ and $r=|I|$. Let us define the set $T_I$ as follows
\[
T_I=\{x=(x_1,x_2,\ldots,x_d)\in H_n^d\mid n\geq x_i>e_i\text{ for }i\in I\text{ and }1\leq x_i\leq e_i\text{ for }i\notin I\}.
\]
Obviously, $T_I$ is a brick as well and $d(T_0,T_I)=r$.

Consequently, we have $\dbinom{k}{r}$ subsets of cardinality $r$, so we have $\dbinom{k}{r}$ distinct disjoint bricks that have distance $r$ from $T_0$. Using the Hamming distance from $T_0$, the bricks $T_I$ generate a partition of $H_n^d$ into $2^k$ disjoint bricks, one for each index set $I\subseteq\{1,2,\ldots,k\}$. If $T_0$ is a real brick, then $k=d$, and each Hamming sphere of radius $r$, with center $T_0$ consists of $\dbinom{d}{r}$ disjoint bricks for $r \in\mathbb{Z}_d$ and $r=0$.

\begin{rmrk}\label{417}
A brick $T_k$ at a distance $k$ from a real brick $T_0$ has exactly $k$ edges that are obtained by taking the edge $(n-e_i)$ instead of the edge $e_i$ of $T_0$.
\end{rmrk}

\begin{rmrk}
Let $T$ be an arbitrary brick in the partition generated by $T_0$. Then $T$ generates the same bricks as $T_0$, so we get the the same partition of the space $H_n^d$.
\end{rmrk}

\begin{rmrk}
Let $T_0$ be a real brick of size $e_1\times e_2\times \ldots\times e_d$ permuted to the origin. If we look at the partition generated by $T_0$ in Euclidean geometry, then each brick has a vertex that coincides with a vertex of the chess-board, different bricks fit on different chess-board vertices and the opposite (the only inner) vertex of each brick is the Euclidean point with coordinates $(e_1,e_2,\ldots ,e_d)$.
\end{rmrk}

\begin{defi}
Let $T,U\subseteq H_n^d$ be two bricks. The bricks $T$ and $U$ are \emph{remote} if $d(T,U)=d$. The pair of $T$ and $U$ is called a \emph{remote couple}, if $U$ contains all the cells that have a distance $d$ from $T$. If $T$ and $U$ form a remote couple, then $T$ is called the \emph{remote mate of $U$} and $U$ is called the \emph{remote mate of $T$}.
\end{defi}

If $T$ and $U$ are remote, then $U$ and $T$ are remote as well. If $T$ and $U$ are remote, then $T\cap U=\emptyset$.

\begin{defi}
Let $X$ be a set of cells. We define $V(X)$, the \emph{volume} of $X$ as the number of cells in $X$.
\end{defi}

If $T\subseteq H_n^d$ is a brick of size $e_1\times e_2\times \ldots\times e_d$, then the volume of $T$ is 
$V(T)=\displaystyle\prod_{i=1}^de_i$.

\begin{defi}\label{def:vcount}
Let $T\subseteq H_n^d$ be the subsystem induced by $\langle E_1,E_2,\ldots ,E_d\rangle$.
The \emph{vertex count} of $T$ is
\[
\vcount(T)=|E_1|+|E_2|+\cdots+|E_d|,
\]
so that for a brick of size $e_1\times e_2\times\ldots\times e_d$ we have
$\vcount(T)=e_1+e_2+\cdots+e_d$.
\end{defi}

\begin{rmrk}\label{rmrk:hypergraph}
The name is explained by the hypergraph reading of the chess-board. Let
$V_1,V_2,\ldots ,V_d$ be disjoint sets of size $n$, the elements of $V_i$ being the
coordinates available on the axis $t_i$, and identify the cell
$(e_1,e_2,\ldots ,e_d)$ with the $d$-element set $\{e_1,e_2,\ldots ,e_d\}$, one vertex
from each $V_i$. Under this identification $H_n^d$ is the complete $d$-partite
$d$-uniform hypergraph on $V_1\cup V_2\cup\cdots\cup V_d$, the rooks are hyperedges,
and the subsystem induced by $\langle E_1,E_2,\ldots ,E_d\rangle$ is the
sub-hypergraph spanned by the vertex set $E_1\cup E_2\cup\cdots\cup E_d$. Thus
$\vcount(T)$ is literally the number of vertices that $T$ spans. For $d=2$ the
hypergraph is a bipartite graph and $\vcount$ is the number of rows plus the number
of columns of the brick.
\end{rmrk}

\begin{defi}
Let $T_0\subseteq H_n^d$ be a brick of size $e_1\times e_2\times \ldots\times e_d$.
The \emph{area of $T_0$ orthogonal to $e_i$ or $t_i$} is
\[
A(T_0,e_i)=A(T_0,t_i)=V(T_0)/e_i=e_1e_2\ldots e_{i-1}e_{i+1}\ldots e_{d-1}e_d.
\]
\end{defi}

\begin{defi}
Let $T_0\subseteq H_n^d$ be a brick of size $e_1\times e_2\times \ldots\times e_d$.
The brick $T_i$, in the partition generated by $T_0$, is the \emph{auxiliary brick} of $T_0$ along $t_i$, if $T_i$ contains all of the cells $(x_1,x_2,\ldots,x_i,\ldots,x_d)$ for which $x_k\le e_k$ if $k\neq i$ and $x_k>e_k$ if $k=i$. $T_0$ and $T_i$ together are called \emph{auxiliaries} along $t_i$.
\end{defi}

Clearly, $d(T_0,T_i)=1$ and $T_0$ has $\dbinom{d}{1}=d$ auxiliary bricks. Figure~\ref{fig2_1} shows the case $d=3$.

\begin{rmrk}
If the bricks $U_1$ and $U_2$ are auxiliaries along an axis $t$, then $U_1$ and $U_2$ are disjoint, so
$V(U_1\cup U_2)=V(U_1)+V(U_2)$ and $U_1\cup U_2$ is a brick with an edge of length $n$ on the axis $t$.
\end{rmrk}

\begin{defi}
A brick $T$ is an \emph{$n$-brick}, if $T$ has at least one edge of length $n$. A brick $T$ is an \emph{$n^2$-brick}, if $T$ has at least two edges of length $n$.
\end{defi}
\begin{rmrk}
A subspace is always an $n$-brick.
\end{rmrk}

Let $L$ be a $d$-LSC and $T\subseteq H_n^d$ be an $n$-brick of size $e_1\times e_2\times \ldots\times e_d$, where $e_i=n$. It is clear from the construction, that each file has exactly one rook. Therefore, the number of rooks in $T$ is the size of the area of $T$ orthogonal to $e_i$, i.e, $V(T)/n$. Based on this observation the following results:

Let $L$ be a $d$-LSC and $T_0$ be a brick of size $e_1\times e_2\times \ldots\times e_k\times e_{k+1}\times \ldots\times e_d$, which has $c_0$ rooks. Let $T_k$ be a brick with distance $k$ from $T_0$ in the partition generated by $T_0$ and contain $c_k$ rooks. Based on Remark~\ref{417} and without loss of generality, we assume that $T_k$ is a brick of size
\[
(n-e_1)\times (n-e_2)\times \ldots\times (n-e_k)\times e_{k+1}\times \ldots\times e_d,
\]
otherwise, we change the order of the axes.

\begin{theo}[Distribution Theorem]\label{theo2.17}
\begin{equation}\label{(201)}
c_k=\frac{V(T_k)-(-1)^kV(T_0)}{n}+(-1)^kc_0=\frac{V(T_k)}{n}-(-1)^k\left[\frac{V(T_0)}{n}-c_0\right]
\end{equation}
\end{theo}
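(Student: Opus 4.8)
The plan is to induct on the distance $r=d(T_0,T_k)$, proving the following uniform statement: for every brick $T$ in the partition generated by $T_0$ with $d(T_0,T)=r$ and containing $c$ rooks, one has $c=\bigl(V(T)-(-1)^rV(T_0)\bigr)/n+(-1)^rc_0$. The Distribution Theorem is the instance $r=k$, $T=T_k$. The base case $r=0$ is immediate, since $T_0$ is the unique brick at distance $0$ from itself, $c=c_0$, and the right-hand side collapses to $c_0$.

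For the inductive step, fix $r=k\ge 1$ and a brick $T_k$ at distance $k$; by Remark~\ref{417} we may assume, changing the order of the axes if necessary, that $T_k=\{x\colon e_i<x_i\le n\text{ for }i\le k,\ 1\le x_i\le e_i\text{ for }i>k\}$. I would introduce the brick $T_{k-1}=\{x\colon e_i<x_i\le n\text{ for }i\le k-1,\ 1\le x_i\le e_i\text{ for }i\ge k\}$, which lies in the same partition and satisfies $d(T_0,T_{k-1})=k-1$. The ranges of $T_{k-1}$ and $T_k$ coincide in every coordinate except the $t_k$-th, where together they partition $\{1,\dots,n\}$ into $\{1,\dots,e_k\}$ and $\{e_k+1,\dots,n\}$; hence $T_{k-1}\cap T_k=\emptyset$ and $T_{k-1}\cup T_k$ is a brick with an edge of length $n$ along $t_k$, i.e. an $n$-brick.

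Now I would count the rooks of $T_{k-1}\cup T_k$ in two ways. On one hand, since it is an $n$-brick along $t_k$, each of its files parallel to $t_k$ is a full file of $H^d_n$ and so carries exactly one rook; the number of such files is $V(T_{k-1}\cup T_k)/n=\bigl(V(T_{k-1})+V(T_k)\bigr)/n$. On the other hand, disjointness gives the count $c_{k-1}+c_k$. Equating yields $c_k=\bigl(V(T_{k-1})+V(T_k)\bigr)/n-c_{k-1}$. Substituting the inductive hypothesis $c_{k-1}=\bigl(V(T_{k-1})-(-1)^{k-1}V(T_0)\bigr)/n+(-1)^{k-1}c_0$, the $V(T_{k-1})$ terms cancel and, using $(-1)^{k-1}=-(-1)^k$, one arrives exactly at $c_k=\bigl(V(T_k)-(-1)^kV(T_0)\bigr)/n+(-1)^kc_0$, which after rearrangement is \eqref{(201)}.

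I do not expect a genuine obstacle; the only care needed is the bookkeeping: checking that $T_{k-1}$ is indeed a brick of the partition at distance $k-1$, that $T_{k-1}\cup T_k$ really is an $n$-brick so that the ``one rook per file'' principle applies, and tracking the sign $(-1)^{k-1}=-(-1)^k$ so that the $V(T_{k-1})$ contributions cancel cleanly. An alternative would be a direct inclusion–exclusion over the $2^k$ partition bricks lying between $T_0$ and $T_k$, but the telescoping induction through one-step (auxiliary) bricks — essentially the identity $V(U_1\cup U_2)=V(U_1)+V(U_2)$ for auxiliaries, iterated — seems the cleanest route.
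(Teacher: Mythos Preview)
Your proposal is correct and follows essentially the same approach as the paper: both arguments build a chain $T_0,T_1,\ldots,T_k$ of bricks in which consecutive members are auxiliaries along a single axis, use that each $T_{i-1}\cup T_i$ is an $n$-brick so that $c_{i-1}+c_i=\bigl(V(T_{i-1})+V(T_i)\bigr)/n$, and then telescope. The only cosmetic difference is that you package this as a clean induction on the distance, whereas the paper computes $c_1,c_2,c_3$ explicitly and then indicates the general pattern.
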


\begin{proof}
Let $T_1$ be the auxiliary brick of $T_0$ along $t_1$ and let $T_1$ contain $c_1$ rooks. Since $T_0\cup T_1$ is an $n$-brick, so $T_0\cup T_1$ has $\dfrac{V(T_1\cup T_0)}{n}=\dfrac{V(T_1)+V(T_0)}{n}$ rooks, so $c_0+c_1=\dfrac{V(T_1)+V(T_0)}{n}$, and
\[
c_1=\dfrac{V(T_1)+V(T_0)}{n}-c_0.
\]
Let the brick $T_2$ be the auxiliary brick of $T_1$ along $t_2$ and let $T_2$ contain $c_2$ rooks. So $T_1\cup T_2$ is an $n$-brick, so $T_1\cup T_2$ has $\dfrac{V(T_2\cup T_1)}{n}=\dfrac{V(T_2)+V(T_1)}{n}$ rooks, so $c_1+c_2=\dfrac{V(T_2)+V(T_1)}{n}$, so
\begin{align*}
c_2&=\frac{V(T_2)+V(T_1)}{n}-c_1\\
&=\frac{V(T_2)+V(T_1)}{n}-\left(\frac{V(T_1)+V(T_0)}{n}-c_0\right)=
\frac{V(T_2)-V(T_0)}{n}+c_0.
\end{align*}
Let the brick $T_3$ be the auxiliary brick of $T_2$ along $t_3$ and let $T_3$ contain $c_3$ rooks. So $T_2\cup T_3$ is an $n$-brick, so $T_2\cup T_3$ has $\dfrac{V(T_3\cup T_2)}{n}=\dfrac{V(T_3)+V(T_2)}{n}$ rooks, so $c_2+c_3=\dfrac{V(T_3)+V(T_2)}{n}$, so
\begin{align*}
c_3&=\frac{V(T_3)+V(T_2)}{n}-c_2\\
&=\frac{V(T_3)+V(T_2)}{n}-\left(\frac{V(T_2)-V(T_0)}{n}+c_0\right)=\frac{V(T_3)+V(T_0)}{n}-c_0.
\end{align*}
When we take $T_{i+1}$, the auxiliary brick of $T_i$, and calculate $c_{i+1}$, then $V(T_i)$ falls out, the sign of $V(T_0)$ and $c_0$ change, in the next step $V(T_{i-1})$ falls out, the sign of $V(T_0)$ and $c_0$ change, etc\ldots
\begin{align*}
c_{i+1}&=\frac{V(T_{i+1})+V(T_i)}{n}-c_i=\frac{V(T_{i+1})+V(T_i)}{n}-\left(\frac{V(T_i)+V(T_{i-1})}{n}-c_{i-1}\right)=\ldots\\
&=\frac{V(T_{i+1})-(-1)^{i+1}V(T_0)}{n}+(-1)^{i+1}c_0.
\end{align*}
So \eqref{(201)} holds, and we can write it in the following form:
\begin{equation}\label{(202)}
(-1)^k\left[\frac{V(T_0)}{n}-c_0\right]=\frac{V(T_k)}{n}-c_k.
\end{equation}
\end{proof}

We also give a second, shorter proof, which yields the theorem in a more general
form.

\begin{proof}[Second proof]
For a cell $x\in H_n^d$ let $\chi(x)=1$ if $x$ contains a rook and $\chi(x)=0$
otherwise, and put
\[
\psi(x)=\frac1n-\chi(x).
\]
If $X$ is an arbitrary set of cells, then $V(X)/n=\sum_{x\in X}1/n$ and the number
of rooks in $X$ is $\sum_{x\in X}\chi(x)$, hence
\[
\df(X)=\sum_{x\in X}\psi(x).
\]
Let the subsystem $X$ be induced by the family $\langle E_1,E_2,\ldots,E_d\rangle$.
Then
\begin{equation}\label{(205)}
\df(X)=\sum_{x_1\in E_1}\ \sum_{x_2\in E_2}\cdots\sum_{x_d\in E_d}
\psi(x_1,x_2,\ldots ,x_d),
\end{equation}
so, for every fixed $i$, the right-hand side of \eqref{(205)} is an additive
function of the set $E_i$: if $E_i$ is the disjoint union of $E_i'$ and $E_i''$,
then the corresponding two values of $\df$ add up to $\df(X)$.

Assume that $E_i=\mathbb{Z}_n$ for some $i$, that is, $X$ is an $n$-brick. Fixing
the coordinates $x_j$ for $j\neq i$, the cells $(x_1,\ldots ,x_i,\ldots ,x_d)$ with
$x_i\in\mathbb{Z}_n$ form a file, which contains exactly one rook, so the inner sum
over $x_i$ equals $n\cdot\frac1n-1=0$. Consequently
\begin{equation}\label{(206)}
\df(X)=0\qquad\text{whenever $X$ is an $n$-brick.}
\end{equation}

Finally, let $\overline{E_i}=\mathbb{Z}_n\setminus E_i$. Since $E_i$ and
$\overline{E_i}$ partition $\mathbb{Z}_n$, additivity in the $i$-th variable and
\eqref{(206)} give
\[
\sum_{x_i\in\overline{E_i}}\ (\cdots)
=\sum_{x_i\in\mathbb{Z}_n}(\cdots)-\sum_{x_i\in E_i}(\cdots)
=0-\sum_{x_i\in E_i}(\cdots),
\]
that is, replacing $E_i$ by its complement changes the sign of $\df$. Replacing $k$
of the sets $E_1,E_2,\ldots ,E_d$ by their complements therefore multiplies $\df$
by $(-1)^k$, so
\[
\df(T_k)=(-1)^k\df(T_0),
\]
and \eqref{(201)} follows from $c_k=V(T_k)/n-\df(T_k)$.%
\end{proof}

\begin{rmrk}\label{rmrk:general}
The second proof uses neither the visibility of the subsystems nor their position on
the chess-board, so the Distribution Theorem~\ref{theo2.17} holds for an arbitrary
family $\langle E_1,E_2,\ldots ,E_d\rangle$ of subsets of $\mathbb{Z}_n$, and not
only for Hamming bricks. Bricks are used in the sequel because they admit a
convenient Euclidean picture, not because the result requires them.
\end{rmrk}

\begin{rmrk}
The only property of $\chi$ used in the second proof is that its sum along every
file equals $1$. Hence the proof applies verbatim if the rooks are replaced by the
entries of a $d$-fold stochastic matrix.
\end{rmrk}

\noindent
So, for a given $k$, $c_k$ depends only on $c_0$ and the volume of $T_0$ and $T_k$ and the parity of $k$.

\begin{defi}
The \emph{density} of a set of cells $X$ is $\varrho(X)=c/V(X)$, where $c$ is the number of rooks in $X$.
\end{defi}

In case of a $d$-LSC the density of the entire chess-board is: $\varrho(H_n^d)=n^{d-1}/n^d=1/n$.

\begin{defi}
The set of cells $X$ has \emph{standard density} if $\varrho(X)=1/n$.
\end{defi}

\begin{defi}
We define the \emph{deflection} of the set of cells $X$ from the standard density as follows
\[
\df(X) = V(X)/n - c
\]
where $c$ is the number of rooks in $X$.
\end{defi}

\begin{rmrk}\label{rmrk:discrepancy}
For a brick $T$ the deflection is, up to sign, the quantity studied under the name of
\emph{discrepancy} by Linial and Luria~\cite{[4]}. They consider boxes
$T=T_1\times\cdots\times T_d$ of a high-dimensional permutation, define
$\mathrm{vol}(T)=\prod_i|T_i|$, and measure the deviation of the number of ones in $T$
from $\mathrm{vol}(T)/n$; in our notation this deviation is exactly $-\df(T)$. Their
concern is how small the deviation can be made over \emph{all} boxes simultaneously.
Ours is different and complementary: we do not estimate $\df$, we show that within the
partition generated by a brick the deflections of the $2^d$ bricks are all equal up to
sign, and are therefore determined by any one of them.
\end{rmrk}

The set of cells $X$ has standard density if and only if $\df(X)=0$. Using the deflection, the equality in \eqref{(201)} can be written in the following two ways:

\begin{theo}[Deflection Theorem]
\begin{equation}\label{(203)}
c_k=\frac{V(T_k)}{n}-(-1)^k\df(T_0).
\end{equation}
\end{theo}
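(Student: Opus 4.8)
The plan is to read the identity off directly from the Distribution Theorem together with the definition of deflection; there is essentially no new content, only a substitution of notation.

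First I would recall the second expression for $c_k$ furnished by Theorem~\ref{theo2.17}, equation~\eqref{(201)}, namely
\[
c_k=\frac{V(T_k)}{n}-(-1)^k\left[\frac{V(T_0)}{n}-c_0\right],
\]
valid for the brick $T_k$ at Hamming distance $k$ from $T_0$ in the partition generated by $T_0$, with $c_0$ and $c_k$ the numbers of rooks in $T_0$ and $T_k$. Next I would apply the definition of deflection to $T_0$: by definition $\df(T_0)=V(T_0)/n-c_0$, so the bracketed quantity above is precisely $\df(T_0)$. Substituting gives
\[
c_k=\frac{V(T_k)}{n}-(-1)^k\df(T_0),
\]
which is \eqref{(203)}. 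Alternatively one may start from the rearranged form \eqref{(202)}, $(-1)^k[V(T_0)/n-c_0]=V(T_k)/n-c_k$, replace the bracket by $\df(T_0)$, and solve for $c_k$.

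There is no real obstacle: the statement is a reformulation of \eqref{(201)}, and the only thing to verify is that the hypotheses of Theorem~\ref{theo2.17} carry over unchanged, which they do since $T_k$ is assumed to lie in the partition generated by $T_0$ at distance $k$. If one wishes, it is worth observing the symmetric companion fact, obtained by the same computation with the roles of $T_0$ and $T_k$ exchanged, that $\df(T_k)=V(T_k)/n-c_k=(-1)^k\df(T_0)$; thus the deflections of two bricks at distance $k$ in the partition agree up to the sign $(-1)^k$, which is the content of the theorem expressed purely in terms of deflections.
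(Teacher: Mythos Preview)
Your proposal is correct and matches the paper's approach exactly: the paper introduces the Deflection Theorem simply as one of the two ways of rewriting \eqref{(201)} once the deflection $\df(T_0)=V(T_0)/n-c_0$ has been defined, with no additional argument. Your remark that the same substitution yields $\df(T_k)=(-1)^k\df(T_0)$ is precisely the paper's subsequent Main Theorem \eqref{(204)}.
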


\begin{theo}[Main Theorem]
\begin{equation}\label{(204)}
\df(T_k)=(-1)^k\df(T_0).
\end{equation}
\end{theo}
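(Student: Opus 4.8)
The plan is to read \eqref{(204)} off directly from the Distribution Theorem, since it is nothing more than a reformulation. Recall that $\df(X)=V(X)/n-c$ where $c$ is the number of rooks in $X$; applying this to $T_0$ and to $T_k$ gives $\df(T_0)=V(T_0)/n-c_0$ and $\df(T_k)=V(T_k)/n-c_k$. Now \eqref{(202)} states precisely that $(-1)^k\bigl[V(T_0)/n-c_0\bigr]=V(T_k)/n-c_k$, so substituting the two deflection identities turns the left-hand side into $(-1)^k\df(T_0)$ and the right-hand side into $\df(T_k)$, which is \eqref{(204)}. Equivalently, one rearranges the Deflection Theorem \eqref{(203)}: moving $c_k$ to the left and $V(T_k)/n$ to the right yields $V(T_k)/n-c_k=(-1)^k\df(T_0)$, i.e.\ $\df(T_k)=(-1)^k\df(T_0)$.

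If a more self-contained argument is wanted, I would instead give a short induction that mirrors the proof of the Distribution Theorem but tracks deflections rather than rook counts. The three ingredients are: (i) $\df$ is additive over disjoint unions, because both $V$ and the rook count are; (ii) every $n$-brick has standard density, hence deflection $0$, since each file of such a brick parallel to its length-$n$ edge is a full file of the chess-board and therefore contains exactly one rook, so the rook count equals $V/n$; and (iii) if $U_1$ and $U_2$ are auxiliaries along an axis $t$, then they are disjoint and $U_1\cup U_2$ is an $n$-brick, so $\df(U_1)+\df(U_2)=\df(U_1\cup U_2)=0$, i.e.\ $\df(U_2)=-\df(U_1)$. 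Since $T_{i+1}$ is (up to the reordering of the axes fixed in the setup preceding the Distribution Theorem) the auxiliary brick of $T_i$ along $t_{i+1}$, step (iii) gives $\df(T_{i+1})=-\df(T_i)$ for $0\le i<k$, and chaining these $k$ equalities yields $\df(T_k)=(-1)^k\df(T_0)$.

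I do not expect a genuine obstacle here: the statement is an immediate corollary of material already proved, and the only points needing a little care are bookkeeping — making sure the deflection is evaluated with the correct rook counts $c_0,c_k$ for the correct bricks $T_0,T_k$ — and, in the self-contained version, noting that the chain $T_0,T_1,\dots,T_k$ joining $T_0$ to the chosen distance-$k$ brick can indeed be realized as a sequence of auxiliary steps along $k$ distinct axes $t_1,\dots,t_k$, which is exactly what Remark~\ref{417} and the reduction preceding the Distribution Theorem supply. Once those are in place, both routes close in one or two lines, so I would present the first (one-line) derivation and optionally remark on the inductive interpretation.
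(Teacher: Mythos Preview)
Your proposal is correct and matches the paper's approach: the paper presents \eqref{(204)} simply as a rewriting of \eqref{(201)}/\eqref{(202)} in terms of the deflection, with no separate proof, and your first derivation spells out exactly this substitution. Your optional inductive version is also fine and just repackages the same auxiliary-brick telescoping used to prove the Distribution Theorem.
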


The Main Theorem says more than that the two members of a remote couple are tied to
each other: it determines the whole partition generated by $T_0$ at once.

\begin{cor}[One degree of freedom]\label{cor:onedof}
Let $L$ be a $d$-LSC and let $T_0$ be a brick. Then the numbers of rooks in all the
$2^d$ bricks of the partition generated by $T_0$ are determined by the volumes of
these bricks together with the single number $\df(T_0)$, namely
\[
c_I=\frac{V(T_I)}{n}-(-1)^{|I|}\df(T_0)
\qquad\text{for every }I\subseteq\{1,2,\ldots ,d\}.
\]
In particular, knowing the number of rooks in one brick of the partition determines
it in every other brick of the partition.
\end{cor}

\begin{rmrk}
For $d=3$ and $I=\{1,2,3\}$ the corollary specialises to the identity
$c_0+c_3=n^2-(a+b+c)n+(ab+bc+ca)$ of Cruse~\cite{[2]}, which concerns a brick and
its remote mate only. Corollary~\ref{cor:onedof} covers every brick of the
partition, and it does so in every dimension.
\end{rmrk}

If $T_0$ has standard density, then $\df(T_k)=0$, ergo, each Hamming brick also has standard density. If $\df(T_0)\neq 0$, then each brick in the Hamming sphere $S_k(T_0)$ has the same deflection, either $\df(T_0)$ if $k$ is even or $-\df(T_0)$ if $k$ is odd. Ergo, the sign of the deflection of the bricks alternates when we step to the bricks of the next Hamming sphere $S_{k+1}(T_0)$. Note, that the deflection is not necessarily integer.

If $T_0$ is not a real brick, then $T_0$ has at least one edge of length $n$, so $\df(T_0)=0$, thus, $T_0$ has a standard density. Consequently, in this case, each brick of the partition has a standard density.

\begin{cor}
If $n$ is a prime number, then there is no real Hamming brick of standard density.
\end{cor}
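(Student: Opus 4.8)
The plan is to reduce the statement to an elementary divisibility fact via the integrality of rook counts. Let $T_0\subseteq H_n^d$ be an arbitrary real Hamming brick, say of size $e_1\times e_2\times\ldots\times e_d$, and let $c_0$ denote the number of rooks it contains in the ambient $d$-LSC. By the definition of a real brick we have $1\le e_i<n$ for every $i\in\mathbb{Z}_d$, and by the volume formula $V(T_0)=\prod_{i=1}^d e_i$. Since $c_0$ counts cells occupied by rooks, $c_0$ is a nonnegative integer, so the deflection $\df(T_0)=V(T_0)/n-c_0$ vanishes if and only if $n\mid V(T_0)$.

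So the whole question becomes: can $n$ divide $\prod_{i=1}^d e_i$ when $n$ is prime and each factor satisfies $1\le e_i<n$? First I would observe that $0<e_i<n$ together with $n$ prime gives $\gcd(e_i,n)=1$ for each $i$, hence $n\nmid e_i$. Then, applying Euclid's lemma (a prime dividing a product divides one of the factors) $d-1$ times, $n\nmid\prod_{i=1}^d e_i$, i.e. $n\nmid V(T_0)$. Consequently $\df(T_0)=V(T_0)/n-c_0\neq 0$, and by the criterion recorded just before the corollary (a set of cells has standard density iff its deflection is $0$), $T_0$ does not have standard density. As $T_0$ was an arbitrary real brick, this proves the claim.

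I do not expect a genuine obstacle here: the argument is a one-line divisibility observation once one notices that the rook count is forced to be an integer. The only point that deserves a word of care is precisely that observation — the density $\varrho(T_0)=c_0/V(T_0)$ can only equal $1/n$ if $V(T_0)/n$ is itself an integer, and the realness hypothesis $e_i<n$ is exactly what blocks the prime $n$ from appearing in the factorisation of $V(T_0)$. It is also worth remarking (though not needed for the proof) that this is sharp in the sense that dropping realness reintroduces $n$-bricks, whose volume is divisible by $n$ and which therefore do have standard density, consistent with the discussion preceding the corollary.
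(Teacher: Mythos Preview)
Your argument is correct and is exactly the reasoning the paper has in mind: the corollary is stated without proof because it follows immediately from the observation that standard density forces $n\mid V(T_0)=\prod_i e_i$, which is impossible for a prime $n$ when every $e_i<n$. There is nothing to add or change.
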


\subsection*{Case $d=2$:}
The brick $T_0$, colored yellow in the Figure~\ref{fig2_3}, is a real brick of size $a\times b$. The bricks $T_{1a}$ and $T_{1b}$ 
are the two auxiliaries of $T_0$ and the brick $T_2$ is the remote mate of $T_0$.
Each file has exactly one rook and let $T_0$ have $c_0$ rooks.
Then $T_{1a}$ has
\[
\frac{V(T_0)+V(T_{1a})}{n}-c_0=b-c_0
\]
rooks and so $T_2$ has
\[
\frac{V(T_{1a})+V(T_2)}{n}-\left(\frac{V(T_0)+V(T_{1a})}{n}-c_0\right)=\frac{V(T_2)-V(T_0)}{n}+c_0
\]
rooks. So
\[
c_2=\frac{V(T_2)-V(T_0)}{n}+c_0=(n-a-b)+c_0.
\]
Another form of this equality is 
\[
c_0-c_2=a+b-n.
\]
\begin{figure}[htb]
\centering\includegraphics[scale=.4]{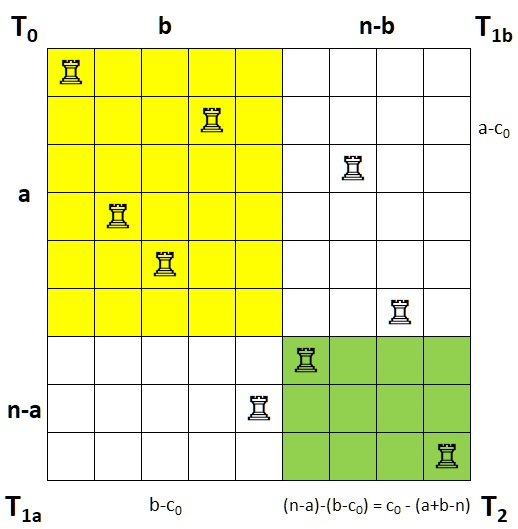}
\caption{}\label{fig2_3}
\end{figure}

\begin{defi}
If $T_0$ is a real brick of size $a\times b$, then the number $(a+b-n)$ is called the \emph{Ryser-number} of $T_0$ and is denoted by $\Ry(T_0)$.
\end{defi}

The Ryser-number of $T_0$ is the difference between the numbers of rooks in $T_0$ and $T_2$. If $c_0$ is known, then we know exactly how many rooks are in $T_2$, that is $c_2=c_0-\Ry(T_0)$.

It is clear, that $\Ry(T_2)=-\Ry(T_0)=(n-a-b)$ and $c_2=c_0+\Ry(T_2)$.

\begin{cor}
If $a+b=n$, then the white bricks  $T_{1a}$ and $T_{1b}$ are squares and $\Ry(T_0)=0$, so $T_0$ and $T_2$ have the same number of rooks.
\end{cor}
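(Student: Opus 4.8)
The plan is to read off the dimensions of the two auxiliary bricks directly from their definition and then substitute the hypothesis $a+b=n$; the rest follows from results already proved for the case $d=2$.

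First I would recall that, with $T_0$ the $a\times b$ real brick permuted to the origin, its auxiliary brick along the first axis, $T_{1a}$, consists of the cells whose first coordinate lies in $\{a+1,\ldots,n\}$ and whose second coordinate lies in $\{1,\ldots,b\}$, hence has size $(n-a)\times b$; symmetrically, $T_{1b}$ has size $a\times(n-b)$, and the remote mate $T_2$ has size $(n-a)\times(n-b)$. Under the hypothesis $a+b=n$ we have $n-a=b$ and $n-b=a$, so $T_{1a}$ has size $b\times b$ and $T_{1b}$ has size $a\times a$; both are squares, in agreement with Figure~\ref{fig2_3}.

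Next, by the definition of the Ryser-number, $\Ry(T_0)=a+b-n$, which is exactly $0$ under the hypothesis. Finally I would invoke the identity $c_2=c_0-\Ry(T_0)$ established just above (equivalently, the Main Theorem with $k=2$, which gives $\df(T_2)=\df(T_0)$ and hence $V(T_2)/n-c_2=V(T_0)/n-c_0$, i.e. $c_0-c_2=\bigl(V(T_0)-V(T_2)\bigr)/n=a+b-n$): since $\Ry(T_0)=0$, it follows that $c_2=c_0$, so $T_0$ and $T_2$ carry the same number of rooks.

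There is essentially no obstacle here; the only point requiring a little care is correctly matching which edges of $T_{1a}$ and $T_{1b}$ have lengths $a$, $b$, $n-a$, $n-b$, and then observing that both ``mixed'' edge products collapse to perfect squares once $a+b=n$ is imposed.
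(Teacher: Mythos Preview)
Your proposal is correct and follows exactly the intended line: the paper states this as an immediate corollary with no separate proof, since it follows at once from the sizes of $T_{1a}$, $T_{1b}$ (read off from Figure~\ref{fig2_3}) and the identity $c_2=c_0-\Ry(T_0)$ established just before. Your explicit computation of the edge lengths and invocation of the Main Theorem simply spell out what the paper leaves implicit.
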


\subsection*{Case $d=3$:} 
The brick $T_0$, colored yellow in the Figure~\ref{fig2_4}, is a real brick of size $a\times b\times c$. The bricks $T_{1a}$, $T_{1b}$ and $T_{1c}$ are the auxiliaries of $T_0$, the brick $T_3$ is the remote mate of $T_0$ and the bricks $T_{2ab}$, $T_{2bc}$ and $T_{2ac}$ are the auxiliaries of $T_3$.

Each file has exactly 1 rook and let $T_0$ have $c_0$ rooks.
Then $T_0\cup T_{1a}$ has $\dfrac{V(T_0)+V(T_{1a})}{n}=bc$ rooks, so $T_{1a}$ has
\[
\frac{V(T_0)+V(T_{1a})}{n}-c_0=bc-c_0
\]
rooks.
\begin{figure}[htb]
\centering\includegraphics[scale=.5]{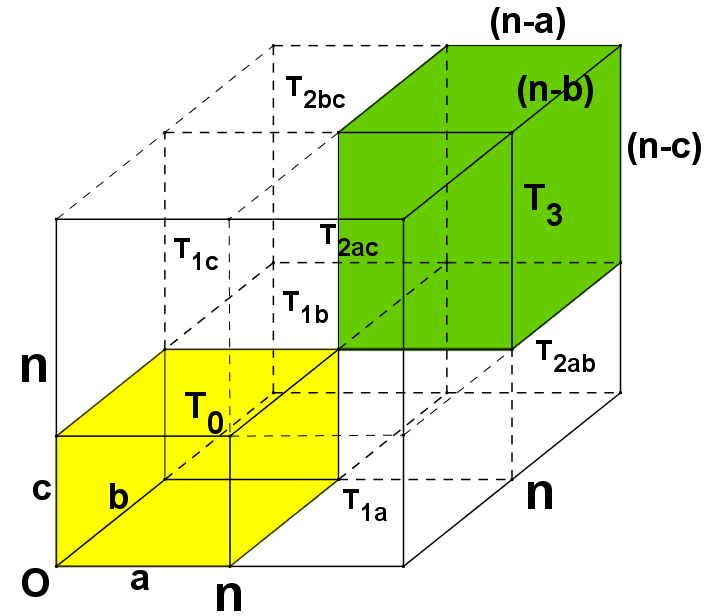}
\caption{}\label{fig2_4}
\end{figure}

\noindent
$T_{1a}\cup T_{2ab}$ has $\dfrac{V(T_{1a})+V(T_{2ab})}{n}=(n-a)c$ rooks, hence $T_{2ab}$ has
\[
\frac{V(T_{1a})+V(T_{2ab})}{n}-\left(\frac{V(T_0)+V(T_{1a})}{n}-c_0\right)=\frac{V(T_{2ab})-V(T_0)}{n}+c_0
\]
$T_{2ab}\cup T_3$ has $\dfrac{V(T_{2ab})+V(T_3)}{n}=(n-a)(n-b)$ rooks, so $T_3$ has
\[
\frac{V(T_{2ab})+V(T_3)}{n}-\left(\frac{V(T_{2ab})-V(T_0)}{n}+c_0\right)=\frac{V(T_3)+V(T_0)}{n}-c_0.
\]
As a result,
\begin{align*}
c_0+c_3&=\frac{V(T_0)+V(T_3)}{n}=\frac{abc+(n-a)(n-b)(n-c)}{n}\\
&=n^2-(a+b+c)n+(ab+bc+ca).
\end{align*}
If $c=0$ or $c=n$, then we also get the right numbers.

\begin{rmrk}
Let $M$ be a $d$-fold stochastic matrix of dimension $d$. Then the sum of the numbers in each file is exactly~1. Let $c_0$ be the sum of the numbers in $T_0$ and $c_k$ be the sum of the numbers in $T_k$. Then the Distribution Theorem~\ref{theo2.17} holds for $d$-fold stochastic matrix, you only have to change the expression from ``number of rooks” to ``sum of the numbers” in the proof. 
\end{rmrk}

\begin{cor}
The sum of the numbers in $T_k$ depends only on the sum of the numbers in $T_0$ and the volume of $T_0$ and $T_k$ and the parity of $k$. 
\end{cor}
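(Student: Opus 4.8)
The plan is to obtain the corollary as an immediate consequence of the extended Distribution Theorem. First I would invoke the preceding Remark: it asserts that Theorem~\ref{theo2.17} holds verbatim for a $d$-fold stochastic matrix $M$ of dimension $d$, the only change being that ``number of rooks'' is everywhere replaced by ``sum of the entries''. Thus, writing $c_0$ for the sum of the entries of $M$ lying in $T_0$ and $c_k$ for the sum of the entries lying in $T_k$, formula~\eqref{(201)} yields
\[
c_k=\frac{V(T_k)-(-1)^k V(T_0)}{n}+(-1)^k c_0 .
\]

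Next I would simply read off the functional dependence. The right-hand side above is a fixed expression in the quantities $c_0$, $V(T_0)$, $V(T_k)$, the sign $(-1)^k$, and the order $n$. Since $n$ is a fixed parameter of the ambient board $H_n^d$ (not a variable of the problem) and $(-1)^k$ is determined by the parity of $k$, the value $c_k$ is completely determined by the sum $c_0$ of the entries in $T_0$, the volumes $V(T_0)$ and $V(T_k)$, and the parity of $k$. This is exactly the assertion of the corollary, so the proof is a one-line deduction from the Remark and \eqref{(201)}.

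For completeness I would note why the extended Distribution Theorem genuinely applies here. The hypothesis is that $T_k$ is one of the bricks at Hamming distance $k$ from $T_0$ in the partition of $H_n^d$ generated by $T_0$; the derivation of \eqref{(201)} then runs through the chain of auxiliary bricks $T_0,T_1,\dots,T_k$ and uses at each step only that the union of two consecutive auxiliaries is an $n$-brick, whence its entry-sum equals its volume divided by $n$ (each file summing to $1$). No step of that argument uses integrality of the counts, so it carries over to stochastic matrices without change. Accordingly there is no real obstacle: the entire content of the corollary is already packaged in the closed form \eqref{(201)}, and one need only observe that $n$ is a constant of the setting and that $(-1)^k$ is a function of the parity of $k$.
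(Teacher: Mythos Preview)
Your proposal is correct and follows exactly the approach the paper intends: the corollary is stated immediately after the Remark extending Theorem~\ref{theo2.17} to $d$-fold stochastic matrices, and the paper offers no separate proof because the assertion is read off directly from formula~\eqref{(201)}. You have simply made explicit what the paper leaves implicit.
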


\begin{rmrk}
The Distribution Theorem~\ref{theo2.17} holds for any types of numbers of $d$-fold stochastic matrices too, if the sum of the numbers is exactly~1 in each file, but we are only dealing with the non-negative case.
\end{rmrk}

\begin{rmrk}
For $d=3$ we get $c_0+c_3=n^2-(a+b+c)n+(ab+bc+ca)$, which is a result of Cruse~\cite{[2]} for triply stochastic matrices including permutation cubes, which are characteristic matrices of Latin squares. Consequently, for triply stochastic matrices $c_0+c_3$ always is an integer.
\end{rmrk}

\begin{rmrk}
The $d=2$ we get $c_2=(n-a-b)+c_0$, thus $c_0-c_2 = a+b-n$, consequently, for doubly stochastic matrices $c_2-c_0$ always is an integer.
\end{rmrk}

\begin{defi}
A \emph{file of brick} $T$ means the cells of a file, which are in $T$.
\end{defi}

\begin{defi}
A \emph{layer of brick} $T$ means the cells of a layer, which are in $T$.
\end{defi}

\begin{defi}
The brick $T$ is large, if $T$ has a layer of size $a\times b$, for which $a+b>n$.
\end{defi}

\begin{cor}
Let $T$ be a large brick in a $d$-fold stochastic matrix or in a $d$-LSC. Then $T$ has no layer, for which the sum of numbers or the number of rooks is $0$, respectively.
\end{cor}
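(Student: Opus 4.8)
The plan is to push the whole question down to a single two-dimensional layer and there apply the Ryser relation $c_0-c_2=a+b-n$ established above, together with the trivial fact that a rook count — or, in the stochastic case, a sum of non-negative entries — is never negative. To set things up I would permute the subspaces $H_i(j)$ so that $T$ is the brick of size $e_1\times e_2\times\ldots\times e_d$ at the origin; since $T$ is large we may, after reindexing the axes, assume $e_1+e_2>n$, and put $a=e_1$ and $b=e_2$. Let $H$ be any layer along the axes $t_1,t_2$; if even one of the $d-2$ fixed coordinates of $H$ exceeds the corresponding $e_i$ then $H\cap T=\emptyset$ and there is nothing to prove, so assume not, and set $L:=H\cap T$, the corner brick of $H$ of size $a\times b$. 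The one structural point needed is that $H$ is itself a $2$-dimensional object of the same kind as the ambient one: a $2$-dimensional subspace of a $d$-LSC is a $2$-LSC, i.e.\ a permutation matrix of order $n$; and in a $d$-fold stochastic matrix every file of $H$ is a file of the whole matrix, hence sums to $1$, so $H$ is a doubly stochastic $n\times n$ matrix.

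Next comes the core estimate. Working inside $H$, write $c(X)$ for the number of rooks of $X$ (resp.\ the sum over $X$ of the entries of the stochastic matrix). If $a=n$ or $b=n$, then $L$ is an $n$-brick of $H$, hence of standard density, so $c(L)=V(L)/n\ge 1>0$. Otherwise $1\le a,b<n$, so $L$ is a real brick of $H$; let $L^{*}$ be its remote mate in $H$, a brick of size $(n-a)\times(n-b)$. The $d=2$ case of the Distribution Theorem, applied to $H$, yields
\[
c(L)-c(L^{*})=\Ry(L)=a+b-n ,
\]
and since $c(L^{*})\ge 0$ (rook counts are non-negative; for stochastic matrices we are in the non-negative case) we obtain
\[
c(L)\ \ge\ a+b-n\ \ge\ 1\ >\ 0 .
\]
The same inequality can also be read off directly: in the permutation matrix / doubly stochastic matrix $H$ the total over the $a$ rows occupied by $L$ equals $a$, of which at most $n-b$ can fall outside the $b$ columns of $L$, so $c(L)\ge a-(n-b)=a+b-n$.

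Finally I would package this. Since $H$ was an arbitrary layer taken in an orientation witnessing the largeness of $T$, the argument shows that every layer of brick $T$ of shape $a\times b$ contains at least one rook (resp.\ has positive entry sum); hence $T$ has no such layer on which the number of rooks — or the sum of the numbers — vanishes, which is the assertion of the corollary. The $d$-fold stochastic version is literally the same argument with ``number of rooks'' replaced throughout by ``sum of the numbers'', exactly as in the remark following the Distribution Theorem.

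I do not expect a genuine obstacle here: the mathematical content is the one-line bound $c_0\ge a+b-n$, immediate from $c_0-c_2=a+b-n$ and $c_2\ge 0$. The only spots that deserve a sentence of care are (i) checking that a $2$-dimensional layer inherits the ambient structure, so that the $d=2$ results are applicable, and (ii) the bookkeeping that identifies the layer of brick $T$, once $T$ has been permuted to the origin, with the corner brick $T_0$ of the two-dimensional computation — together with the harmless side case in which a side of the layer has length $n$, which is disposed of above via standard density.
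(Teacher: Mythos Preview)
Your proof is correct and is precisely the argument the paper has in mind: the corollary is stated without proof, immediately after the $d=2$ Ryser relation $c_0-c_2=a+b-n$ and the remark extending the Distribution Theorem to stochastic matrices, and your reduction to a two-dimensional layer together with $c_0\ge c_2+(a+b-n)\ge 1$ is exactly the intended one-line deduction. Your care in noting that the conclusion applies to layers of the shape $a\times b$ witnessing largeness is appropriate and is confirmed by the very next corollary in the paper, which requires all three inequalities $a+b>n$, $b+c>n$, $a+c>n$ to conclude that \emph{every} layer of a $3$-dimensional brick has a rook.
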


\begin{cor}
If $T$ is a brick of size $a\times b\times c$ in a $3$-LSC and $a+b>n$, $b+c>n$, $a+c>n$, then each layer of the brick $T$ has at least one rook.
\end{cor}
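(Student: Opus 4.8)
The plan is to reduce the three-dimensional statement to the two-dimensional Ryser-number computation already carried out in the $d=2$ subsection, by working inside a single layer at a time.

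First I would use the fact, recorded just after the definition of a $d$-LSC, that every layer of $H_n^3$ — that is, every two-dimensional subspace $H_i(j)$ — is itself a $2$-LSC of order $n$, hence carries exactly $n$ non-attacking rooks, one in each of its files. After permuting $T$ to the origin, a layer of the brick $T$ becomes a sub-brick of one of these $2$-LSC's: a layer perpendicular to the third axis meets $T$ in a brick of size $a\times b$, a layer perpendicular to the first axis in a brick of size $b\times c$, and a layer perpendicular to the second axis in a brick of size $a\times c$. So it suffices to prove the purely two-dimensional claim: in any $2$-LSC of order $n$, a brick of size $p\times q$ contains at least $p+q-n$ rooks.

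For that two-dimensional claim I would argue as follows. If $p<n$ and $q<n$, the brick $R$ is real; by the $d=2$ computation its remote mate has size $(n-p)\times(n-q)$, and the difference between the number of rooks in $R$ and the number of rooks in its remote mate equals the Ryser-number $\Ry(R)=p+q-n$. Since the remote mate contains a non-negative number of rooks, $R$ contains at least $p+q-n$ rooks. If instead $p=n$ (or $q=n$), then $R$ is an $n$-brick, its area orthogonal to the edge of length $n$ is $q$ (resp.\ $p$), so $R$ contains exactly $q$ (resp.\ $p$) rooks, which again equals $p+q-n$ in that case. In every case $R$ contains at least $p+q-n$ rooks.

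Finally, applying this to the three types of layers of $T$ and using that $a,b,c,n$ are integers with $a+b>n$, $b+c>n$, $a+c>n$: a layer of $T$ perpendicular to the third axis has at least $a+b-n\ge 1$ rooks, one perpendicular to the first axis at least $b+c-n\ge 1$ rooks, and one perpendicular to the second axis at least $a+c-n\ge 1$ rooks. Hence every layer of $T$ contains at least one rook. (This is a symmetric strengthening of the preceding corollary, which only asks that $T$ be large in one direction.) The only point that needs a little care — and the only place I expect any friction — is the bookkeeping in the degenerate situation where one of $a,b,c$ equals $n$, so that the relevant layer-of-brick is not a real brick and the Ryser-number step must be replaced by the direct ``one rook per file'' count; this is routine and is already handled by the case split above.
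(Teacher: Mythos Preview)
Your argument is correct and matches the paper's intended reasoning: the corollary is stated without proof because it follows immediately from the $d=2$ Ryser-number computation applied layer by layer, exactly as you reduce it. Your handling of the degenerate case where an edge equals $n$ (so the layer-of-brick is an $n$-brick rather than a real brick) is also in line with the paper's treatment of $n$-bricks via the ``one rook per file'' count.
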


\goodbreak
\section{Sum of Hamming Distances}

We first recall a classical identity for binomial coefficients, which will be
used below. It is well known; we include a short proof only to keep the paper
self-contained.

\begin{statm}\label{theo4.1}
\[
\sum_{k=0}^dk(-1)^k\binom{d}{k}=0
\]
for any $d\geq 2$.
\end{statm}

\begin{proof}
The term of index $k=0$ vanishes, so the sum may be started at $k=1$. For
$1\leq k\leq d$ the absorption identity
\[
k\binom{d}{k}=d\binom{d-1}{k-1}
\]
holds, hence, substituting $j=k-1$,
\[
\sum_{k=0}^dk(-1)^k\binom{d}{k}
=d\sum_{k=1}^d(-1)^k\binom{d-1}{k-1}
=-d\sum_{j=0}^{d-1}(-1)^j\binom{d-1}{j}
=-d(1-1)^{d-1}=0,
\]
where the last equality uses $d-1\geq 1$.%
\end{proof}

\begin{rmrk}
Equivalently, differentiating $\sum_{k=0}^d\binom{d}{k}(-x)^k=(1-x)^d$ with
respect to $x$ gives $\sum_{k=1}^dk(-1)^k\binom{d}{k}x^{k-1}=-d(1-x)^{d-1}$,
whose right-hand side vanishes at $x=1$ whenever $d\geq 2$.
\end{rmrk}

Let $L$ be a $d$-LSC and $T_0$ be a real brick. Each rook has a Hamming distance from $T_0$. Let us denote $s_k$ the number of rooks in the Hamming sphere $S_k(T_0)$, and $V(S_k(T_0))$ is the sum of the volumes of all the bricks of the Hamming sphere $S_k(T_0)$.

We sum up all Hamming distances from $T_0$ for all rooks. The sum is denoted by $h_n^d(T_0)$.

$S_k(T_0)$ has $\dbinom{d}{k}$ bricks, each of which has $c_k=\dfrac{V(T_k)}{n}-(-1)^k\df(T_0)$ rooks. Thus, the sum
\[
s_k=\dfrac{V(S_k(T_0))}{n}-\dbinom{d}{k}(-1)^k\df(T_0).
\]
\begin{align*}
h_n^d(T_0)&=\sum_{k=0}^dks_k=\sum_{k=0}^dk\left[\frac{V(S_k(T_0))}{n}-\binom{d}{k}(-1)^k\df(T_0)\right]\\
&=\sum_{k=0}^dk\frac{V(S_k(T_0))}{n}-\sum_{k=0}^dk\binom{d}{k}(-1)^k\df(T_0)\\
&=\sum_{k=0}^dk\frac{V(S_k(T_0))}{n}-\df(T_0)\sum_{k=0}^dk(-1)^k\binom{d}{k}.
\end{align*}
Because of 
Statement~\ref{theo4.1}
\begin{equation}\label{(401)}
h_n^d(T_0)=\sum_{k=0}^dks_k=\sum_{k=0}^dk\frac{V(S_k(T_0))}{n}
\end{equation}
\begin{theo}[Distance Theorem]
\begin{equation}\label{(402)}
h_n^d(T_0)=n^{d-2}\left[(n-e_1)+(n-e_2)+\ldots+(n-e_d)\right] 
\end{equation}
\end{theo}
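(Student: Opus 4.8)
The plan is to start from equation~\eqref{(401)}, which has already eliminated the deflection term and reduced the statement to an identity about volumes:
\[
h_n^d(T_0)=\frac1n\sum_{k=0}^dk\,V(S_k(T_0)).
\]
So it suffices to show $\sum_{k=0}^dk\,V(S_k(T_0))=n^{d-1}\bigl[(n-e_1)+(n-e_2)+\ldots+(n-e_d)\bigr]$, and then divide by $n$.

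After the permutations $p_1,\dots,p_d$ put $T_0$ at the origin, the Hamming sphere $S_k(T_0)$ is exactly the set of the $\binom dk$ bricks $T_I$ with $|I|=k$, and $V(T_I)=\prod_{i\in I}(n-e_i)\prod_{i\notin I}e_i$. Hence
\[
\sum_{k=0}^dk\,V(S_k(T_0))=\sum_{I\subseteq\{1,\dots,d\}}|I|\prod_{i\in I}(n-e_i)\prod_{i\notin I}e_i .
\]
I would now write $|I|=\sum_{j\in I}1$ and interchange the order of summation, grouping the terms by the contributing index $j$; the sum becomes $\sum_{j=1}^d(n-e_j)\sum_{I\ni j}\prod_{i\in I\setminus\{j\}}(n-e_i)\prod_{i\notin I}e_i$. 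For each fixed $j$ the inner sum runs over all subsets $J$ of $\{1,\dots,d\}\setminus\{j\}$ (writing $I=\{j\}\cup J$), so it factors as $\prod_{i\ne j}\bigl((n-e_i)+e_i\bigr)=n^{d-1}$. This gives $\sum_k k\,V(S_k(T_0))=n^{d-1}\sum_{j=1}^d(n-e_j)$, and \eqref{(401)} then yields \eqref{(402)}. (Equivalently, one may read this off as $f'(1)$ for the polynomial $f(x)=\prod_{i=1}^d\bigl(e_i+x(n-e_i)\bigr)=\sum_kV(S_k(T_0))x^k$.)

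I would also record a direct rook-counting route that avoids even \eqref{(401)}: for a cell $x=(x_1,\dots,x_d)$ the nearest cell of the origin brick $T_0$ can be taken to agree with $x$ on every coordinate with $x_i\le e_i$ and to differ on the rest, so $d(T_0,x)=\#\{i:x_i>e_i\}$. Summing over all $n^{d-1}$ rooks and exchanging summations,
\[
h_n^d(T_0)=\sum_{i=1}^d\#\{\text{rooks }r:\ x_i(r)>e_i\}.
\]
For fixed $i$ the cells with $x_i>e_i$ form $H_i(e_i+1)\cup\dots\cup H_i(n)$, an $n$-brick of volume $(n-e_i)n^{d-1}$, hence containing $(n-e_i)n^{d-2}$ rooks; summing over $i$ gives $n^{d-2}\sum_i(n-e_i)$.

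The computation itself is routine; the only point that needs care is the identity $d(T_0,x)=\#\{i:x_i>e_i\}$, which relies on $T_0$ being permuted to the origin so that each brick $T_I$ of the partition is the ``orthant'' of $H_n^d$ cut out by its index set $I$ — this is exactly what the definition of $T_I$ encodes. It is worth noting that the final formula is also consistent when some $e_i$ equals $n$ (that summand vanishes, matching $\df(T_0)=0$), so no separate treatment of non-real $T_0$ is required.
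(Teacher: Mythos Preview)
Your proof is correct, and both routes you give are genuinely different from the paper's argument. The paper proves \eqref{(402)} by induction on $d$: it verifies the base case $d=2$ by hand, and for the step it splits each $S_k^{d+1}(T_0^{d+1})$ according to whether the last edge is $e_{d+1}$ or $n-e_{d+1}$, obtaining the recursion $nh_n^{d+1}(T_0)=(n-e_{d+1})n^d+n^2h_n^d(T_0)$; it then treats the non-real case separately at the end. Your first argument replaces this induction by a one-line algebraic identity (the $|I|=\sum_{j\in I}1$ swap, or equivalently $f'(1)$ for $f(x)=\prod_i(e_i+x(n-e_i))$), which is shorter and also shows transparently why the non-real case needs no extra work. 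Your second argument is even more economical: by reading $d(T_0,x)$ as the number of ``bad'' coordinates and summing over axes, you bypass \eqref{(401)} and hence the binomial identity of Theorem~\ref{theo4.1} altogether. The paper's inductive approach, by contrast, makes the recursive structure of the Hamming spheres explicit but is considerably longer.
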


\begin{proof}
We prove, that
\begin{equation}\label{(403)}
nh_n^d(T_0)=\sum_{k=0}^d k V(S_k(T_0))=n^{d-1}\left[(n-e_1)+(n-e_2)+\ldots+(n-e_d)\right]
\end{equation}
by induction on $d$.

In the case $d=2$, there are $c_0$ rooks with distance $0$, $c_{1a}+c_{1b}$ rooks with distance 1 and $c_2$ rooks with distance~2. The sum is:
\begin{align*}
h_n^2(T_0)&=0\cdot c_0+1\cdot (c_{1a}+c_{1b})+2\cdot c_2\\
&=(b-c_0)+(a-c_0)+2(c_0-a-b+n)=2n-a-b=(n-a)+(n-b),
\end{align*}
so
\[
nh_n^2(T_0)=n\big[(n-a) + (n-b)\big]
\]
Let us assume that $d>2$ and \eqref{(403)} holds.
\[
\sum_{k=0}^{d+1}kV(S_k^{d+1}(T_0))=n^d\big[(n-e_1)+(n-e_2)+\ldots+(n-e_d)+(n-e_{d+1})\big]
\]
Because of \eqref{(401)} 
\[
nh_n^{d+1}(T_0)=\displaystyle\sum_{k=0}^{d+1}kV(S_k^{d+1}(T_0))
\]

Let $T_0^{d+1}$ be a real brick in $H_n^{d+1}$. The volume of $T_0^{d+1}$ is $V(T_0^{d+1})=\displaystyle\prod_{i=1}^{d+1}e_i$. Let $T_k^{d+1}$ be a Hamming brick in the partition generated by $T_0^{d+1}$, and $d\big(T_0^{d+1}, T_k^{d+1}\big)=k$, then there exist an index set $I={i_1,i_2,\ldots,i_k}$ such, that we changed $e_i$ of $T_0^{d+1}$ to $(n-e_i)$ in $T_k^{d+1}$ for $i\in I$ and we kept $e_j$ of $T_0^{d+1}$ in $T_k^{d+1}$ for $j\notin I$. There are exactly $\dbinom{d+1}{k}$ index set with this property and each of them defines a $T_k^{d+1}$ brick.

The volume of $T_0^{d+1}$ is $V(T_0^{d+1})=\displaystyle\prod_{i=1}^{d+1}e_i$ and the volume of $T_k^{d+1}$ is
\[
V(T_k^{d+1})=\prod_{i\in I}(n-e_i)\cdot \prod_{j\notin I}e_j
\]

There are two types of bricks that have the distance $k$ from $T_0$. The first type has an edge $e_{d+1}$ on the axis $t_{d+1}$, the other has an edge $(n-e_{d+1})$ on the axis $t_{d+1}$. Take all bricks from the first type.

In this case we changed $k$ edges from the edges of $T_0^d$. The sum of the volume of these bricks is $e_{d+1}V(S_k(T_0^d))$. In the other case we changed $k-1$ edges from the edges of $T_0^d$ and the $e_{d+1}$ to $(n-e_{d+1})$.

The sum of the volume of these bricks is $(n-e_{d+1})V(S_{k-1}(T_0^d))$. So
\begin{align*}
V\big(S_k^{d+1}(T_0^{d+1})\big)&=e_{d+1}V\big(S_k^d(T_0^d)\big)+(n-e_{d+1})V\big(S_{k-1}^d(T_0^d)\big)\\
\sum_{k=0}^{d+1}kV\big(S_k^{d+1}(T_0^{d+1})\big)&=\sum_{k=0}^{d+1}k\left[e_{d+1}V(S_k^d(T_0^d))+(n-e_{d+1})V(S_{k-1}^d(T_0^d))\right]\\
&=e_{d+1}\sum_{k=0}^{d+1}kV(S_k^d(T_0^d))+(n-e_{d+1})\sum_{k=0}^{d+1}kV(S_{k-1}^d(T_0^d)).
\end{align*}
Because of $(d+1)V(S_{d+1}^d(T_0^d))=0$ and $0\cdot V(S_{-1}^d(T_0^d))=0$ we get
\begin{align*}
nh_n^{d+1}(T_0)&=\sum_{k=0}^{d+1}kV(S_k^{d+1}(T_0^{d+1}))\\
&=e_{d+1}\sum_{k=1}^{d+1}kV(S_k^d(T_0^d))+(n-e_{d+1})\sum_{k=1}^{d+1}kV(S_{k-1}^d(T_0^d))\\
&=e_{d+1}nh_n^d(T_0)+(n-e_{d+1})\left[\sum_{k=0}^d1\cdot V(S_k^d(T_0))+\sum_{k=0}^d kV(S_k^d(T_0))\right]\\
&=e_{d+1}nh_n^d(T_0)+(n-e_{d+1})\left[n^d+nh_n^d(T_0)\right]\\
&=e_{d+1}nh_n^d(T_0)+n\left[n^d+nh_n^d(T_0)\right]-e_{d+1}\left[n^d+nh_n^d(T_0)\right]\\
&=e_{d+1}nh_n^d(T_0)+n^{d+1}+nnh_n^d(T_0)-e_{d+1}n^d-e_{d+1}nh_n^d(T_0)\\
&=(n-e_{d+1})n^d+ n^2h_n^d(T_0).
\end{align*}
Based on the assumption of the induction we change $h_n^d(T_0)$ for the right part of the equality \eqref{(402)}
\begin{align*}
nh_n^{d+1}(T_0)&=(n-e_{d+1})n^d+n^2n^{d-2}\left[(n-e_1)+(n-e_2)+\ldots+(n-e_d)\right]\\
&=n^d\left[(n-e_1)+(n-e_2)+\ldots+(n-e_d)+(n-e_{d+1})\right]
\end{align*}
so
\begin{equation}\label{(404)}
\begin{split}
h_n^{d+1}(T_0)&=n^{d-1}\left[(n-e_1)+(n-e_2)+\ldots+(n-e_d)+(n-e_{d+1})\right]\\
&=n^{d-1}\vcount(T_{d+1})
\end{split}
\end{equation}
So \eqref{(402)} holds for any $d>1$.

If $T_0$ is not a real brick, then $T_0$ is an $n$-brick. Since $\df(T_0)=0$, so \eqref{(402)} holds. Let's assume, that $T_0$ has $m$ edges, $e_1$, $e_2$, \ldots, $e_m<n$. Every $T_k$ brick in the partition generated by $T_0$ has $d-m$ edges of length $n$. So for $T_k$ of dimension $d$ the $V(T_k^d)=n^{d-m}V(T_k^m)$, where $T_k^m$ means a brick of dimension $m$ and edges $e_1$, $e_2$, \ldots, $e_m$. In this case \eqref{(402)} holds for $T_0^m$, so $h_n^m(T_0^m)=n^{m-2}\left[(n-e_1)+(n-e_2)+\ldots+(n-e_m)\right]$
\begin{align*}
n^{d-m} h_n^m(T_0^m)&=n^{d-2}\left[(n-e_1)+(n-e_2)+\ldots+(n-e_m)\right.\\
&\qquad\left.{}+(n-n)+\ldots+(n-n)\right]
\end{align*}
The left-hand side is $h_n^d(T_0)$, therefore, \eqref{(402)} holds for not real bricks too.%
\end{proof}

\begin{cor}
If we take the sum of the Hamming distances of all rooks of a $d$-LSC from a brick $T_0$, then the result does not depend on the number of rooks in $T_0$.
\end{cor}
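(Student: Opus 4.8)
The plan is to obtain the corollary as a direct reading of the Distance Theorem, after isolating where a dependence on $c_0$ (the number of rooks in $T_0$) could possibly enter. The sum $h_n^d(T_0)=\sum_{k=0}^d k\,s_k$ is, a priori, sensitive to how the rooks of the LSC are placed, through the counts $s_k$ of rooks in the Hamming spheres $S_k(T_0)$. However, the Deflection Theorem tells us that every brick $T_k\subseteq S_k(T_0)$ carries exactly $c_k=V(T_k)/n-(-1)^k\df(T_0)$ rooks, and the quantity $\df(T_0)=V(T_0)/n-c_0$ is the \emph{only} place in which $c_0$ appears. So the entire $c_0$-dependence of $h_n^d(T_0)$ is funneled through the single number $\df(T_0)$.

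First I would sum over the $\binom{d}{k}$ bricks comprising $S_k(T_0)$ to get $s_k=V(S_k(T_0))/n-\binom{d}{k}(-1)^k\df(T_0)$, and then form $\sum_{k=0}^d k\,s_k$. This breaks $h_n^d(T_0)$ into a volume term $\frac1n\sum_{k=0}^d kV(S_k(T_0))$, which involves only $n$ and the edge lengths $e_1,\dots,e_d$ of $T_0$, plus a correction term $-\df(T_0)\sum_{k=0}^d k(-1)^k\binom{d}{k}$. The correction term is the sole carrier of the $c_0$-dependence.

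The one substantive ingredient is Theorem~\ref{theo4.1}, which gives $\sum_{k=0}^d k(-1)^k\binom{d}{k}=0$ for $d\ge 2$; this annihilates the correction term, leaving $h_n^d(T_0)=\frac1n\sum_{k=0}^d kV(S_k(T_0))$ as in \eqref{(401)}, equivalently $h_n^d(T_0)=n^{d-2}\bigl[(n-e_1)+\dots+(n-e_d)\bigr]$ by the Distance Theorem \eqref{(402)}. Neither formula mentions $c_0$, which is exactly the assertion. I do not anticipate a real obstacle: all of the genuine work lives in Theorem~\ref{theo4.1} and in the Distance Theorem, and the corollary is just the observation that $\df(T_0)$ is the only conduit for $c_0$ and that the binomial identity seals it off. (If $T_0$ is not a real brick the statement is vacuous, since then $\df(T_0)=0$ forces $c_0=V(T_0)/n$ regardless of the LSC.)
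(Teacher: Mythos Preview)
Your proposal is correct and follows exactly the paper's own route: the paper derives \eqref{(401)} by writing $s_k=V(S_k(T_0))/n-\binom{d}{k}(-1)^k\df(T_0)$, summing $k\,s_k$, and killing the $\df(T_0)$-term via Theorem~\ref{theo4.1}, then proves the Distance Theorem \eqref{(402)}; the corollary is stated immediately afterward with no separate argument, precisely because neither \eqref{(401)} nor \eqref{(402)} mentions $c_0$. Your identification of $\df(T_0)$ as the sole conduit for $c_0$ and of Theorem~\ref{theo4.1} as the mechanism that eliminates it is exactly the paper's reasoning.
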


\begin{cor}
If $T_0$ is a real brick, then $T_d$ is the only brick that has a distance $d$ from $T_0$, so because of \eqref{(402)}
\begin{equation}\label{(405)}
\begin{aligned}
h_n^d(T_0)&=n^{d-2}\left[(n-e_1)+(n-e_2)+\ldots+(n-e_d)\right]=n^{d-2}\vcount(T_d)\\
h_n^d(T_d)&=n^{d-2}\left[e_1+e_2+\ldots+e_d\right]=n^{d-2}\vcount(T_0) 
\end{aligned}
\end{equation}
\end{cor}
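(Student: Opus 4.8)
The plan is to derive both lines of~\eqref{(405)} directly from the Distance Theorem~\eqref{(402)}, after two small structural observations. First I would record that when $T_0$ is a real brick every coordinate satisfies $1\le e_i<n$, so the complementary edge lengths $n-e_i$ also satisfy $1\le n-e_i<n$; hence the brick $T_d$ — the unique member of the Hamming sphere $S_d(T_0)$, obtained as in Remark~\ref{417} by replacing each edge $e_i$ of $T_0$ with $n-e_i$ — is itself a real brick. Uniqueness of $T_d$ is immediate, since $S_d(T_0)$ consists of $\binom{d}{d}=1$ brick. This already settles the first clause of the corollary.

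For the first displayed line I would simply invoke~\eqref{(402)} with center $T_0$, giving $h_n^d(T_0)=n^{d-2}[(n-e_1)+\cdots+(n-e_d)]$. Since $T_d$ has edge lengths $n-e_1,\dots,n-e_d$, the definition of diameter yields $\diam(T_d)=(n-e_1)+\cdots+(n-e_d)$, so the two right-hand expressions coincide and the first line follows.

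For the second line I would apply the Distance Theorem a second time, now with center $T_d$. By the earlier remark that any brick of the partition generated by $T_0$ generates the same partition, $T_0$ is exactly the brick at distance $d$ from $T_d$, i.e.\ $T_0$ is the remote mate of $T_d$. Writing $f_i=n-e_i$ for the edge lengths of $T_d$ — a real brick by the first step — formula~\eqref{(402)} gives $h_n^d(T_d)=n^{d-2}[(n-f_1)+\cdots+(n-f_d)]=n^{d-2}[e_1+\cdots+e_d]=n^{d-2}\diam(T_0)$.

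The only point that needs care — and it is a mild one — is ensuring that the Distance Theorem genuinely applies to $T_d$: one must know that $T_d$ is real (verified in the first step) and that summing the Hamming distances of all rooks from $T_d$ takes place within the very same partition, which is guaranteed by the remark that every brick of the partition induces that partition. Once this is in place, the complementary edges of $T_d$ are precisely the $e_i$, which is exactly what turns $\sum_i (n-f_i)$ into $\sum_i e_i$ and produces $\diam(T_0)$.
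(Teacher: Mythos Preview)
Your proposal is correct and matches the paper's approach: the paper gives no separate proof for this corollary, treating both lines as immediate consequences of~\eqref{(402)}, and your argument simply makes explicit the two applications of the Distance Theorem (once centered at $T_0$, once at $T_d$) together with the observation that $T_d$ has edge lengths $n-e_i$. Your extra care in verifying that $T_d$ is itself real so that~\eqref{(402)} applies a second time is a reasonable precaution, though note that the end of the proof of the Distance Theorem already extends~\eqref{(402)} to non-real bricks, so this check is not strictly required.
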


\section{Remote Brick Couples (RBCs)}

From now on we are dealing with the cases $d=2$ and $d=3$.
Based on the Distribution Theorem~\ref{theo2.17} 
\begin{align*}
c_0-c_2&=\frac{V(T_2)-V(T_0)}{n}=\frac{ab-(n-a)(n-b)}{n}=a+b-n\qquad\text{and}\\
c_0+c_3&=\frac{V(T_3)+ V(T_0)}{n}=\frac{abc+(n-a)(n-b)(n-c)}{n}\\
&=n^2-(a+b+c)n+(ab+bc+ca)
\end{align*}

\begin{defi}
For $d=2$ the RBC $(T_0,T_2)$ is called \emph{balanced}, if $c_0-c_2=a+b-n$ holds.
\end{defi}

\begin{defi}
For $d=3$ let the \emph{capacity} of RBC $(T_0,T_3)$ be 
\[
\capa(T_0,T_3)=n^2-(a+b+c)n+(ab+bc+ca)
\]
\end{defi}

\begin{defi}
For $d=3$ the RBC $(T_0,T_3)$ is called \emph{stuffed}, if $c_0+c_3=\capa(T_0,T_3)$.
\end{defi}

\begin{cor}
For a 2-LSC $L$ holds, that each RBC $(T_0,T_2)$ of $L$ is balanced.
For a 3-LSC $M$ holds, that each RBC $(T_0,T_2)$ of any layer of $M$ is balanced and each RBC $(T_0,T_3)$ of $M$ is stuffed.
\end{cor}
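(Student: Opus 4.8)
The plan is to obtain all three assertions as direct specialisations of the Distribution Theorem~\ref{theo2.17}. The one preliminary point is that a genuine remote couple automatically has its short brick $T_0$ real: if $T_0$ had an edge of length $n$, then, as noted just before Remark~\ref{417}, the maximal distance from $T_0$ to a cell, hence to any other brick, would drop below $d$, so there would be nothing at distance $d$ for the mate to contain. Consequently, in every case below \eqref{(201)} applies with the index $k$ equal to the ambient dimension.

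First I would handle the $2$-LSC. Let $(T_0,T_2)$ be an RBC of a $2$-LSC $L$ with $T_0$ real of size $a\times b$; then $T_2$ is the unique brick of the partition generated by $T_0$ at distance $2$, and by Remark~\ref{417} it has size $(n-a)\times(n-b)$, so $V(T_0)=ab$ and $V(T_2)=(n-a)(n-b)$. Putting $k=2$ in \eqref{(201)} gives $c_2=\tfrac{1}{n}\bigl(V(T_2)-V(T_0)\bigr)+c_0$, and the routine identity $\tfrac{1}{n}\bigl(ab-(n-a)(n-b)\bigr)=a+b-n$ turns this into $c_0-c_2=a+b-n$. That is exactly the defining condition of a balanced RBC, and since $T_0$ was arbitrary, every RBC of $L$ is balanced.

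For a $3$-LSC $M$ there are two things to prove. A layer of $M$ is a $2$-dimensional subspace, i.e.\ an $n\times n$ board on which the rooks of $M$ are still non-attacking, hence itself a $2$-LSC of order $n$ (each subspace of dimension $k$ being a $k$-LSC with $n^{k-1}$ non-attacking rooks); so an RBC $(T_0,T_2)$ of a layer is an RBC of a $2$-LSC and is balanced by the previous paragraph. For an RBC $(T_0,T_3)$ of $M$ with $T_0$ real of size $a\times b\times c$, the mate $T_3$ has size $(n-a)\times(n-b)\times(n-c)$ by Remark~\ref{417}; taking $d=k=3$ in \eqref{(201)} gives $c_3=\tfrac{1}{n}\bigl(V(T_3)+V(T_0)\bigr)-c_0$, so $c_0+c_3=\tfrac{1}{n}\bigl(abc+(n-a)(n-b)(n-c)\bigr)$. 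Expanding the product, the $abc$-terms cancel and the right-hand side collapses to $n^2-(a+b+c)n+(ab+bc+ca)=\capa(T_0,T_3)$, which is precisely the condition that $(T_0,T_3)$ be stuffed.

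There is no real obstacle here: the notions ``balanced'' and ``stuffed'' were framed so as to match the output of Theorem~\ref{theo2.17}, so the proof is bookkeeping once one records the two easy facts that a remote couple has a real short brick and that a layer of a $3$-LSC carries the structure of a $2$-LSC. One could equivalently invoke the Main Theorem \eqref{(204)}, $\df(T_k)=(-1)^k\df(T_0)$, together with $V(T_0)=\prod e_i$ and $V(T_2)$, $V(T_3)$ computed from the complementary edge lengths, but the passage through \eqref{(201)} is the shortest.
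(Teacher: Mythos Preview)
Your proposal is correct and follows exactly the route the paper intends: the corollary is stated without proof because the identities $c_0-c_2=a+b-n$ and $c_0+c_3=n^2-(a+b+c)n+(ab+bc+ca)$ were already derived from the Distribution Theorem~\ref{theo2.17} at the opening of the section, and ``balanced'' and ``stuffed'' are then \emph{defined} to mean precisely these equalities; your treatment of the layer case via the observation that each $2$-dimensional subspace of a $3$-LSC is itself a $2$-LSC is likewise the paper's own viewpoint (stated just after the definition of a $d$-LSC). Your preliminary remark that in a genuine remote couple $T_0$ must be real is a worthwhile explicit check that the paper leaves implicit.
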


\begin{rmrk}
$T_0$ has different meanings for $d=2$ and $d=3$.
\end{rmrk}

\begin{rmrk}
The expression $\capa(T_0,T_3)$ is referred sometimes by $\capa(n,a,b,c)$ to emphasize that $\capa(T_0,T_3)$ is a function with integer variables $a$, $b$, $c$, where $a$, $b$, $c\in\{1,2\ldots,n\}$ and the integer $n$ is fixed. Cruse~\cite{[2]} proved some properties of this function.
\end{rmrk}

\begin{defi}
The brick $T$ is \emph{degenerated}, if at least one edge of $T$ is~$0$.
\end{defi}

If $T$ is degenerated, then it contains no rooks.

We consider the remote bricks $(T_0,T_3)$ as degenerated RBC, if either $T_0$ or $T_3$ has one edge of size~0, for example $c = 0$ or $n-c = 0$. If $c=0$ then the size of $T_0$ is $a\times b\times 0$, if $n-c=0$ then the size of $T_3$ is $(n-a)\times (n-b)\times 0$ as shown in Figure~\ref{fig5_1}.
The capacity function provides the correct results for both cases, the number of rooks that the $n$-brick can contain.

\begin{figure}[htb]
\centering
\hbox{\includegraphics[scale=.234]{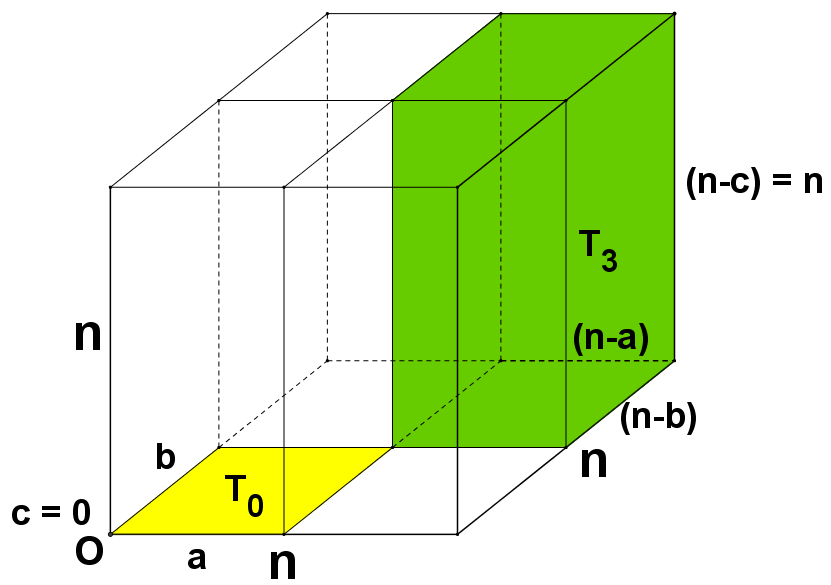}\hfill\includegraphics[scale=.234]{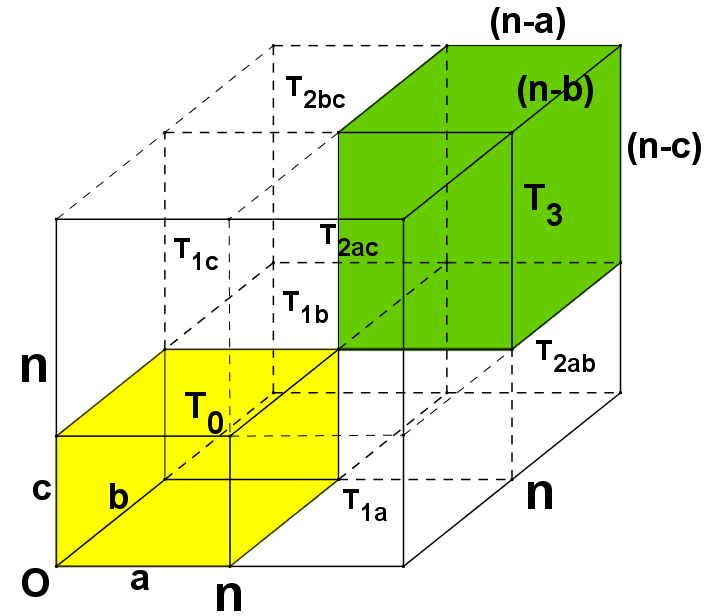}\hfill\includegraphics[scale=.234]{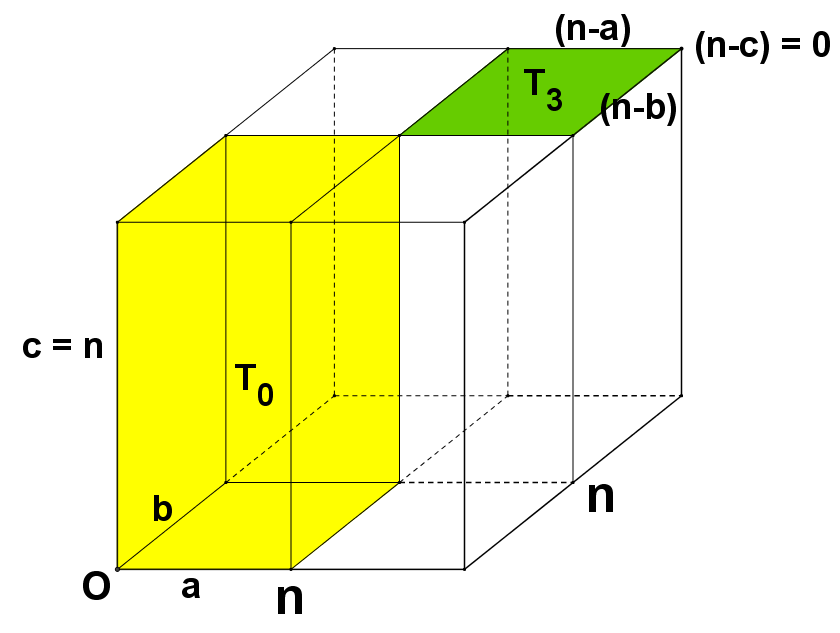}}
\caption{}\label{fig5_1}
\end{figure}

The capacity function can be written as follows:
\[
\capa(n,a,b,c)=n^2-(a+b+c)n+(ab+bc+ca)=(n-a)(n-b)+c(a+b-n).
\]
If the integer variable $c$ goes from $0$ to $n$, then the value of the function goes from $(n-a)(n-b)$, which is the area of $T_3$ orthogonal to $z$, to $ab$, which is the area of $T_0$ orthogonal to $z$, as depicted on the left-hand side of Figure~\ref{fig5_1} and on the right-hand side of Figure~\ref{fig5_1}, respectively. The change is $(a+b-n)$ in each step.

The layer $c$ is balanced, so the yellow brick of the layer has $c_0$ rooks and the remote brick in this the layer has $c_2$ rooks and $c_0-c_2=(a+b-n)$.

The step from $(c-1)$ to $c$ can be seen in the Figure~\ref{fig5_2}.
\begin{figure}[htb]
\centering
\includegraphics[scale=.45] {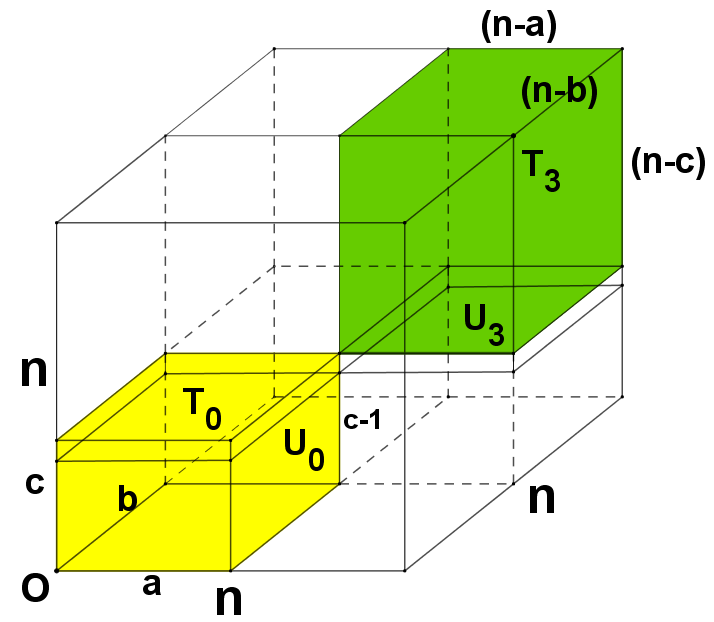}
\caption{}\label{fig5_2}
\end{figure}

After this step, the number of rooks in brick $T_3$ changes by $-c_2$, the number of rooks in brick $T_0$ changes by $c_0$, so by $c_0-c_2=(a+b-n)$ combined.

If $a+b-n>0$, then $(n-a)(n-b)<ab$, so the capacity increases from $(n-a)(n-b)$ to $ab$, if $a+b-n<0$, then $(n-a)(n-b)>ab$, so the capacity decreases from $(n-a)(n-b)$ to $ab$ and if $a+b-n=0$, then $(n-a)(n-b)=ab$, so the capacity does not change, i.e, the RBCs have the same capacity for all~$c$.

The capacity of an RBC can be regarded as a generalization of the capacity of an $n$-brick (degenerated RBC).

\begin{rmrk}
In a $3$-LSC $c_0+c_3=\dfrac{V(T_3)+V(T_0)}{n}$ for each RBC $(T_0,T_3)$. \\$V(T_3)+V(T_0)=V(T_3\cup T_0)$ because of $T_0\cap T_3=\emptyset$, hence
\[\varrho(T_0\cup T_3)=\dfrac{c_0+c_3}{V(T_0\cup T_3)}=\dfrac{c_0+c_3}{V(T_0)+V(T_3)}=\dfrac{1}{n},\] 
ergo, each RBC $(T_0,T_3)$ has a standard density.
\end{rmrk}

\begin{figure}[htb]
\centering
\includegraphics [scale=.5]{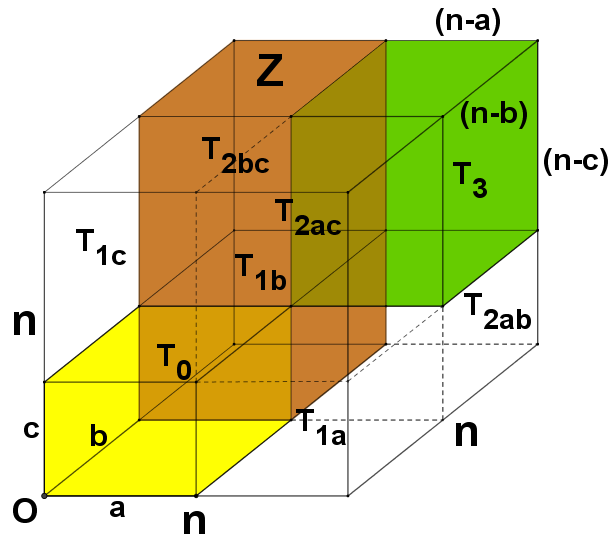}
\caption{}\label{fig5_3}
\end{figure}

\begin{defi}
An $n$-brick is called an \emph{axis} if it has exactly one edge of size $n$.
\end{defi}

If we consider the bricks $T_{1b}$ and $T_{2bc}$ together, like the brown $n$-brick in the Figure~\ref{fig5_3}
, then the structure of $T_0$, $T_{1b}$, $T_{2bc}$, $T_3$ combined looks like a hinge (door hinge).
The brown $n$-brick $T_{1b}\cup T_{2bc}$ is the axis of the hinge, the bricks $T_0$ and $T_3$ are the leafs of the hinge.

On one hand, a hinge can be considered as an axis with two leafs, on the other hand, a hinge is the union of two disjoint $n$-bricks $(T_0\cup T_{1b})$ and $(T_3\cup T_{2bc})$. That gives

\begin{cor}[Hinge Volume]
\begin{equation}\label{(502)}
V(T_0\cup T_3)=V(T_0\cup T_{1b})+(T_3\cup T_{2bc})-V(T_{1b}\cup T_{2bc})
\end{equation}
\end{cor}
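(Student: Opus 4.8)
The plan is to compute the volume of the whole hinge $\mathcal{H}=T_0\cup T_{1b}\cup T_{2bc}\cup T_3$ in two different ways and compare. First I would observe that $T_0$, $T_{1b}$, $T_{2bc}$, $T_3$ are four of the bricks of the partition of $H_n^3$ generated by $T_0$: they correspond to the index sets $\emptyset$, $\{b\}$, $\{b,c\}$, $\{a,b,c\}$ respectively, so in particular they are pairwise disjoint. Consequently the volume of any union of a subfamily of them is the sum of the individual volumes; for instance
\[
V(T_0\cup T_3)=V(T_0)+V(T_3),\qquad V(T_{1b}\cup T_{2bc})=V(T_{1b})+V(T_{2bc}),
\]
and likewise $V(T_0\cup T_{1b})=V(T_0)+V(T_{1b})$ and $V(T_3\cup T_{2bc})=V(T_3)+V(T_{2bc})$. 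Each of these four unions is again a brick; $T_0\cup T_{1b}$ and $T_3\cup T_{2bc}$ are $n$-bricks and $T_{1b}\cup T_{2bc}$ is an axis, exactly as described in the text preceding the statement.

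Next I would use the two descriptions of the hinge. Viewing $\mathcal{H}$ as the axis $T_{1b}\cup T_{2bc}$ with the two leaves $T_0$ and $T_3$ attached, disjointness gives
\[
V(\mathcal{H})=V(T_{1b}\cup T_{2bc})+V(T_0)+V(T_3)=V(T_{1b}\cup T_{2bc})+V(T_0\cup T_3).
\]
Viewing $\mathcal{H}$ instead as the union of the two disjoint $n$-bricks $T_0\cup T_{1b}$ and $T_3\cup T_{2bc}$ (these lie in complementary slabs along the $z$-axis, one with third coordinate $\le c$, the other with third coordinate $>c$), disjointness gives
\[
V(\mathcal{H})=V(T_0\cup T_{1b})+V(T_3\cup T_{2bc}).
\]
Equating the two expressions for $V(\mathcal{H})$ and transferring $V(T_{1b}\cup T_{2bc})$ to the other side yields \eqref{(502)}.

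I do not expect a real obstacle here: the only points needing a word of justification are that the four bricks are pairwise disjoint and that the named pairwise unions are again bricks, and both are immediate from the partition generated by $T_0$ together with the remark that auxiliaries along an axis have union an $n$-brick. As a sanity check one may instead substitute the explicit sizes $V(T_0)=abc$, $V(T_{1b})=a(n-b)c$, $V(T_{2bc})=a(n-b)(n-c)$, $V(T_3)=(n-a)(n-b)(n-c)$ into both sides of \eqref{(502)} and verify that each side reduces to $abc+(n-a)(n-b)(n-c)$.
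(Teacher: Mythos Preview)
Your proof is correct and follows exactly the argument the paper sketches in the sentence immediately preceding the corollary: the hinge is described once as the axis $T_{1b}\cup T_{2bc}$ with leaves $T_0$ and $T_3$, and once as the disjoint union of the two $n$-bricks $T_0\cup T_{1b}$ and $T_3\cup T_{2bc}$, and equating the two expressions for the total volume yields \eqref{(502)}. You have simply written this out in full, with the disjointness justified via the partition generated by $T_0$; nothing differs in substance from the paper's own derivation.
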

and dividing both sides of \eqref{(502)} by $n$ we get

\begin{cor}[Hinge Capacity]\label{theo5.10}
\begin{equation}\label{(503)}
\capa(T_0,T_3)=\frac{V(T_0)+ V(T_{1b})}{n}+\frac{V(T_3)+ V(T_{2bc})}{n}-\frac{V(T_{1b})+V(T_{2bc})}{n} 
\end{equation}
\end{cor}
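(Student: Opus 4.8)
The plan is to read off \eqref{(503)} from the Hinge Volume identity \eqref{(502)} by dividing through by $n$ and rewriting each union-volume as a sum of brick volumes. First I would record that the four bricks $T_0$, $T_{1b}$, $T_{2bc}$, $T_3$ are all distinct members of the partition of $H_n^3$ generated by $T_0$: $T_{1b}$ is the auxiliary of $T_0$ along $t_2$, the brick $T_{2bc}$ is the auxiliary of $T_3$ along $t_1$ (equivalently, the distance-$2$ brick in which the $t_2$- and $t_3$-edges of $T_0$ are exchanged), and $T_3$ is the remote mate of $T_0$. Being distinct bricks of one partition, they are pairwise disjoint, so each of the three unions appearing on the right of \eqref{(502)} has volume equal to the sum of the volumes of its two parts, i.e. $V(T_0\cup T_{1b})=V(T_0)+V(T_{1b})$, $V(T_3\cup T_{2bc})=V(T_3)+V(T_{2bc})$, and $V(T_{1b}\cup T_{2bc})=V(T_{1b})+V(T_{2bc})$; likewise $V(T_0\cup T_3)=V(T_0)+V(T_3)$ on the left.

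The one point that needs a word is the identification of the left-hand side. I would show that
\[
\capa(T_0,T_3)=\frac{V(T_0\cup T_3)}{n}=\frac{V(T_0)+V(T_3)}{n},
\]
which is exactly the computation already carried out in the text, since $\dfrac{abc+(n-a)(n-b)(n-c)}{n}=n^2-(a+b+c)n+(ab+bc+ca)$; equivalently it is the content of the Remark stating that the RBC $(T_0,T_3)$ has standard density. Note this is an identity in $n,a,b,c$ alone, independent of any placement of rooks, so no appeal to the Latin property is required at this step.

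Assembling: divide \eqref{(502)} by $n$, replace $V(T_0\cup T_3)/n$ on the left by $\capa(T_0,T_3)$, and on the right replace the three union-volumes by the corresponding sums of brick volumes. This is precisely \eqref{(503)}. I do not expect a genuine obstacle here — the corollary is a bookkeeping reformulation of \eqref{(502)} — and the only place to take care is to keep $\capa(T_0,T_3)$ (a function of $n,a,b,c$) conceptually separate from $c_0+c_3$ (a count in a particular $3$-LSC), which coincide only because both equal $V(T_0\cup T_3)/n$; the proof above uses only the first, purely volumetric, equality.
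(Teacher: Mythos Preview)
Your proposal is correct and is essentially the paper's own argument: the corollary is obtained there in one line by dividing the Hinge Volume identity \eqref{(502)} by $n$, exactly as you do, with the identification $\capa(T_0,T_3)=\dfrac{V(T_0)+V(T_3)}{n}$ on the left. Your added remarks on pairwise disjointness and on keeping $\capa(T_0,T_3)$ conceptually separate from $c_0+c_3$ are sound and only make explicit what the paper leaves implicit.
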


With the help of equality \eqref{(503)} we can prove, without using the Distribution Theorem~\ref{theo2.17}, that any RBC has as many rooks as its capacity.
\begin{theo}
For any RBC $(T_0,T_3)$ in an LSC
\[
c_0+c_3=n^2-(a+b+c)n+(ab+bc+ca)
\]
\end{theo}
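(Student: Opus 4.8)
The plan is to avoid the Distribution Theorem entirely and instead exploit the hinge decomposition recorded in Corollary~\ref{theo5.10}. The only input I need is the elementary counting fact, established earlier in the excerpt, that an $n$-brick $X$ with an edge of length $n$ along some axis $t_i$ contains exactly $V(X)/n$ rooks, since each file parallel to $t_i$ meets $X$ in a complete file and hence carries precisely one rook.

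First I would fix the notation of Figure~\ref{fig5_3}: $T_0$ of size $a\times b\times c$ with auxiliary $T_{1b}$ along $t_2$ (size $a\times(n-b)\times c$), remote mate $T_3$ of size $(n-a)\times(n-b)\times(n-c)$, and $T_{2bc}$ the auxiliary of $T_3$ along $t_1$ (size $a\times(n-b)\times(n-c)$). Then I would check that the three relevant unions are $n$-bricks: $T_0\cup T_{1b}$ has size $a\times n\times c$ (edge $n$ on $t_2$), $T_3\cup T_{2bc}$ has size $n\times(n-b)\times(n-c)$ (edge $n$ on $t_1$), and the ``axis'' $T_{1b}\cup T_{2bc}$ has size $a\times(n-b)\times n$ (edge $n$ on $t_3$); in each case the two pieces are disjoint and fit together along the axis carrying the length-$n$ edge, so volumes add.

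Applying the counting fact to these three $n$-bricks gives
\begin{align*}
c_0+c_{1b}&=\frac{V(T_0)+V(T_{1b})}{n},\\
c_3+c_{2bc}&=\frac{V(T_3)+V(T_{2bc})}{n},\\
c_{1b}+c_{2bc}&=\frac{V(T_{1b})+V(T_{2bc})}{n}.
\end{align*}
Adding the first two equalities and subtracting the third, the unknowns $c_{1b}$ and $c_{2bc}$ cancel on the left, leaving $c_0+c_3$, while the right-hand side is exactly $\capa(T_0,T_3)$ by Corollary~\ref{theo5.10}, i.e.\ by \eqref{(503)}. Hence $c_0+c_3=\capa(T_0,T_3)=n^2-(a+b+c)n+(ab+bc+ca)$.

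The only thing to be careful about is the bookkeeping that makes $c_{1b}$ and $c_{2bc}$ cancel, together with the degenerate boundary cases $c=0$ or $c=n$ (equivalently $T_0$ or $T_3$ has a zero edge); but those were already handled when we observed that the capacity function returns the correct rook count for degenerate RBCs, so no separate argument is needed. There is no real obstacle here: all the substance was front-loaded into the Hinge Volume and Hinge Capacity corollaries, and once the three $n$-bricks are identified the theorem follows in essentially one line.
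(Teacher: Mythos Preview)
Your proof is correct and follows essentially the same hinge argument as the paper: you use the same three $n$-bricks $T_0\cup T_{1b}$, $T_3\cup T_{2bc}$, $T_{1b}\cup T_{2bc}$ and combine their rook counts by inclusion--exclusion. The only cosmetic difference is that the paper plugs in the explicit areas $ac$, $(n-b)(n-c)$, $a(n-b)$ and expands directly, whereas you keep the volumes abstract and invoke Corollary~\ref{theo5.10} to identify the result with $\capa(T_0,T_3)$; the underlying computation is identical.
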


\begin{proof}
The $n$-brick $T_0\cup T_{1b}$ contains $ac$ rooks, the $n$-brick $T_{2bc}\cup T_3$ contains $(n-b)(n-c)$ rooks and the $n$-brick $T_{1b}\cup T_{2bc}$ contains $a(n-b)$ rooks. So the RBC $(T_0,T_3)$ contains
\[
ac+(n-b)(n-c)-a(n-b)
\]
rooks, i.e,
\[
c_0+c_3=ac+(n-b)(n-c)-a(n-b),
\]
but the right-hand side is equal to $n^2-(a+b+c)n+(ab+bc+ca)$.%
\end{proof}

\begin{rmrk}
The complement of a hinge is also a hinge, so the chess-board $H^3$ consists of two disjoint hinges.
\end{rmrk}

\bibliographystyle{plain}

\end{document}